\newcommand{\mrm}[1]{\mathrm{#1}}
\newcommand{\mL}{\mrm{L}}
\newcommand{\mH}{\mrm{H}}
\newcommand{\eps}{\varepsilon}
\newcommand{\dsp}{\displaystyle}
\newcommand{\Om}{\Omega}
\theoremstyle{plain}
\newtheorem{theorem}{\sffamily Theorem}[section]
\newtheorem{proposition}[theorem]{\sffamily Proposition}
\newtheorem{lemma}[theorem]{\sffamily Lemma}
\newtheorem{remark}[theorem]{\sffamily Remark}
\def\BET{\begin{theorem}}
\def\ENT{\end{theorem}}
\def\BEP{\begin{proposition}}
\def\ENP{\end{proposition}}
\def\BEL{\begin{lemma}}
\def\ENL{\end{lemma}}
\def\BER{\begin{remark} \rm}
\def\ENR{\end{remark}}
\def\bea{\begin{eqnarray}}
\def\eea{\end{eqnarray}}
\def\beas{\begin{eqnarray*}}
\def\eeas{\end{eqnarray*}}
\def\beq{\begin{equation}}
\def\eeq{\end{equation}}
\def\beal{\begin{align*}}
\def\eeal{ \end{align*} }
\def\rowleq{\nonumber \\  & \leq & }
\def\bbC{{\mathbb C}}
\def\bbN{{\mathbb N}}
\def\bbR{{\mathbb R}}
\def\ef{\eqref}
\begin{document}

~\vspace{0.0cm}
\begin{center}
{\sc \bf\fontsize{19}{19}\selectfont Spectrum of the Laplacian with mixed boundary\\[6pt] conditions in a chamfered quarter of layer}

\end{center}
\begin{center}
	\textsc{Lucas Chesnel}$^1$, \textsc{Sergei A. Nazarov}$^2$, \textsc{Jari Taskinen}$^{3}$\\[16pt]
	\begin{minipage}{0.91\textwidth}
		{\small
$^1$ IDEFIX, Ensta Paris, Institut Polytechnique de Paris, 828 Boulevard des Mar\'echaux, 91762 Palaiseau, France;\\
$^2$ Institute of Problems of Mechanical Engineering RAS, V.O., Bolshoi pr., 61, St. Petersburg, 199178, Russia;\\
$^3$ Department of Mathematics and Statistics, University of Helsinki, P.O.Box 68, FI-00014 Helsinki, Finland.\\[10pt]
			E-mails:
 \texttt{lucas.chesnel@inria.fr}, \texttt{srgnazarov@yahoo.co.uk}, \texttt{jari.taskinen@helsinki.fi}.\\[-14pt]
			\begin{center}
				(\today)
			\end{center}
		}
	\end{minipage}
\end{center}
\textbf{Abstract.} We investigate the spectrum of a Laplace operator with mixed boundary conditions in an unbounded chamfered quarter of layer. This problem arises in the study of the spectrum of the Dirichlet Laplacian in thick polyhedral domains having some symmetries such as the so-called Fichera layer. The geometry we consider depends on two parameters gathered in some vector $\kappa=(\kappa_1,\kappa_2)$ which characterizes the domain at the edges. By exchanging the axes and/or modifying their orientations if necessary, it is sufficient to restrict the analysis to the cases $\kappa_1\ge0$ and $\kappa_2\in[-\kappa_1,\kappa_1]$. We identify the essential spectrum and establish different results concerning the discrete spectrum with respect to $\kappa$. In particular, we show that for a given $\kappa_1>0$, there is some $h(\kappa_1)>0$ such that discrete spectrum exists for $\kappa_2\in[-\kappa_1,0)\cup(h(\kappa_1),\kappa_1]$ whereas it is empty for $\kappa_2\in[0,h(\kappa_1)]$. The proofs rely on classical arguments of spectral theory such as the max-min principle. The main originality lies rather in the delicate use of the features of the geometry.  \\[5pt]
\noindent\textbf{Key words.} Laplacian with mixed boundary conditions, chamfered quarter of layer, Fichera layer, discrete spectrum, trapped modes.\\[5pt]
\noindent\textbf{Mathematics Subject Classification.} 35J05, 35P15, 35Q40, 81Q10, 65N25

\section{Formulation  of the problem}  \label{sec1}

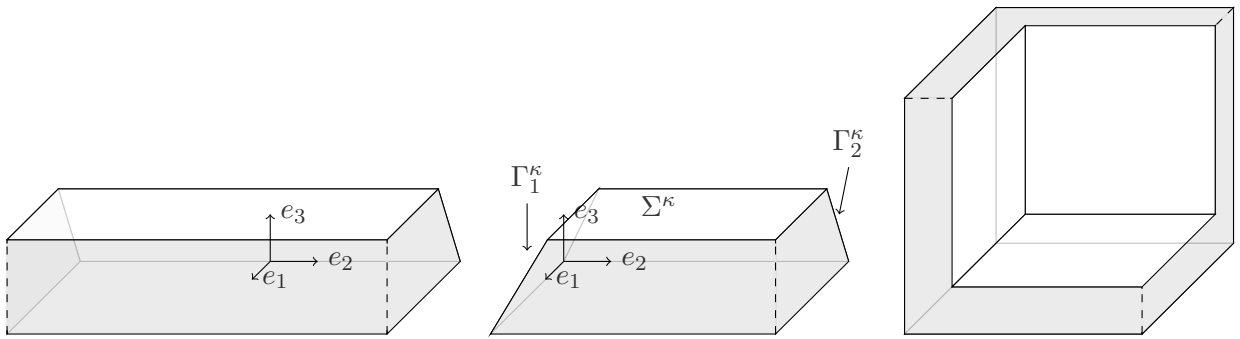
\begin{figure}[h!]
\centering
\begin{tikzpicture}[fill opacity=0.8,draw,scale=2.5]
\filldraw[fill=white] (-1,0,0) -- ++(2,0,0) -- ++(0,0.5,0.3) -- ++(-2, 0, 0) -- cycle;
\filldraw[fill=gray!20] (-1,0,0) -- ++ (0,0,1) -- ++(0,0.5,0) --++ (0,0,-0.7) -- cycle;
\filldraw[fill=gray!20] (1,0,0) -- ++ (0,0,1) -- ++(0,0.5,0) --++ (0,0,-0.7) -- cycle;
\filldraw[fill=gray!20] (-1,0,1) -- ++ (2,0,0) -- ++(0,0.5,0) --++ (-2,0,0) -- cycle;
\filldraw[fill=white] (-1,0.5,1) -- ++ (2,0,0) -- ++(0,0,-0.7) --++ (-2,0,0) -- cycle;
\draw[gray!20,line width=0.3mm] (-1,0.02,1)-- ++(0,0.46,0);
\draw[gray!20,line width=0.3mm] (1,0.02,1)-- ++(0,0.46,0);
\draw[dashed] (-1,0,1)-- ++(0,0.5,0);
\draw[dashed] (1,0,1)-- ++(0,0.5,0);
\draw[->] (0,0,0) -- (0.25,0,0) node[anchor=west]{$e_2$};
\draw[->] (0,0,0) -- (0,0.25,0) node[anchor=west]{$e_3$};
\draw[->] (0,0,0) -- (0,0,0.25) node[anchor=west]{$e_1$};
\end{tikzpicture}\quad
\begin{tikzpicture}[fill opacity=0.8,draw,scale=2.5]
\draw[<-] (1.45,0.25,0) -- (1.5,0.5,0) node[anchor=south]{$\Gamma_2^\kappa$};
\draw[<-] (0,0.25,0.5) -- (0,0.5,0.5) node[anchor=south]{$\Gamma_1^\kappa$};
\filldraw[fill=white] (0,0,0) -- ++(1.5,0,0) -- ++(0,0.5,0.3) -- ++(-1.2, 0, 0) -- cycle;
\filldraw[fill=white] (0,0,0) -- ++(0,0,1) -- ++(0.3,0.5,0) -- ++(0, 0, -0.7) -- cycle;
\filldraw[fill=gray!20] (0,0,1) -- ++(1.5,0,0) -- ++(0,0.5,0) -- ++(-1.2, 0, 0) -- cycle;
\filldraw[fill=gray!20] (1.5,0,0) -- ++(0,0,1) -- ++(0,0.5,0) -- ++(0, 0, -0.7) -- cycle;
\filldraw[fill=white] (1.5,0.5,0.3) -- ++(0,0,0.7) -- ++(-1.2,0,0) -- ++(0, 0, -0.7) -- cycle;
\draw[gray!20,line width=0.3mm] (1.5,0.02,1)-- ++(0,0.46,0);
\draw[dashed] (1.5,0,1)-- ++(0,0.5,0);
\draw[->] (0,0,0) -- (0.25,0,0) node[anchor=west]{$e_2$};
\draw[->] (0,0,0) -- (0,0.25,0) node[anchor=west]{$e_3$};
\draw[->] (0,0,0) -- (0,0,0.25) node[anchor=west]{$e_1$};
\node at (0.5,0.4,0) [anchor=north] {$\Sigma^\kappa$};
\end{tikzpicture}\quad\begin{tikzpicture}[fill opacity=0.8,draw,scale=2.5]
\filldraw[fill=white] (-0.25,-0.25,-0.25) -- ++(1.25,0,0) -- ++(0,1.25,0) -- ++(-1.25, 0, 0) -- cycle;
\filldraw[fill=white] (-0.25,-0.25,-0.25) -- ++(0,1.25,0) -- ++(0,0,1.25) -- ++(0,-1.25, 0) -- cycle;
\filldraw[fill=white] (-0.25,-0.25,-0.25) -- ++(0,0,1.25) -- ++(1.25,0,0) -- ++(0,0,-1.25) -- cycle;
\filldraw[fill=gray!20] (0,0,1) -- ++(1,0,0) -- ++(0,-0.25,0) -- ++(-1.25, 0, 0)-- ++(0,1.25,0) --++(0.25,0,0); 
\filldraw[fill=gray!20] (0,1,0) -- ++(0,0,1) -- ++(-0.25,0,0) -- ++(0, 0, -1.25)-- ++(1.25,0,0) --++(0,0,0.25); 
\filldraw[fill=gray!20] (1,0,0) -- ++(0,1,0) -- ++(0,0,-0.25) -- ++(0, -1.25,0)-- ++(0,0,1.25) --++(0,0.25,0); 
\filldraw[fill=white] (0,0,0) -- ++(1,0,0) -- ++(0,1,0) -- ++(-1, 0, 0) -- cycle;
\filldraw[fill=white] (0,0,0) -- ++(0,1,0) -- ++(0,0,1) -- ++(0,-1, 0) -- cycle;
\filldraw[fill=white] (0,0,0) -- ++(0,0,1) -- ++(1,0,0) -- ++(0,0,-1) -- cycle;
\draw[gray!20,line width=0.3mm] (-0.24,1,1)-- ++(0.23,0,0);
\draw[gray!20,line width=0.3mm] (1,-0.24,1)-- ++(0,0.23,0);
\draw[gray!20,line width=0.3mm] (1,1,-0.24)-- ++(0,0,0.23);
\draw[dashed] (-0.25,1,1)-- ++(0.25,0,0);
\draw[dashed] (1,-0.25,1)-- ++(0,0.25,0);
\draw[dashed] (1,1,-0.25)-- ++(0,0,0.25);
\end{tikzpicture}
\caption{Domains $\mathscr{B}^{\kappa_1}$ (left) and $\Omega^\kappa$ (centre). Fichera layer $\mathscr{F}$ (right).}\label{Figure3D}
\end{figure}

The original motivation for this work comes from the study of the spectrum of the Laplace operator with Dirichlet boundary conditions in thick polyhedral domains having some symmetries. The archetype of such geometries is the so-called Fichera layer 
\begin{equation}\label{defF}
\mathscr{F} := \bigcup_{j=1,2,3} \big\{ x=(x_1,x_2,x_3)\in \bbR^3\,\,|\,x_j \in (0,1),\, x_k > 0,\,k \not=j \big\}
\end{equation}
represented in Figure \ref{Figure3D} right (see \cite{Fich75} for the original article that gave rise to the name). Exploiting symmetries, in certain cases one can reduce the analysis to the one of the spectrum of the Laplace operator with mixed boundary conditions in chamfered quarters of layers. More precisely, the geometries that we consider in this article are characterized by two parameters $\kappa_1$, $\kappa_2\in\bbR$ that we gather in some vector $\kappa=(\kappa_1,\kappa_2)$. Referring to carpentry and locksmith tools, we first define the ``blade'' 
\begin{equation}\label{00}
\mathscr{B}^{\kappa_1} := \big\{ x\,|\,x_1 > \kappa_1 x_3,\,x_2 \in \bbR,\,x_3\in (0,1) \big\}
\end{equation}
(see Figure \ref{Figure3D} left). Then we introduce the  ``incisor''
\begin{equation}\label{0}
\Omega^\kappa := \{ x \in \mathscr{B}^{\kappa_1}\,|\,x_2 > \kappa_2 x_3 \}
\end{equation}
(see Figure \ref{Figure3D} centre). Let us give names to the different components of the boundary $\partial\Omega^\kappa$ of $\Omega^\kappa$. First, denote by $\Sigma^\kappa$  the union of the two ``horizontal'' quadrants:
\[
\Sigma^\kappa := \{ x\in\partial\Omega^\kappa\,|\,x_3 = 0\mbox{ or }x_3=1\}.
\]
Then consider the laterals sides of the incisor. Set
\begin{equation}\label{02}
\Gamma_1^\kappa := \partial\Omega^\kappa\cap\mathscr{B}^{\kappa_1},\qquad \Gamma_2^\kappa := \partial\Omega^\kappa\setminus\{\overline{\Gamma_1^\kappa}\cup\overline{\Sigma^\kappa}\}.
\end{equation}
We study the spectral problem with mixed boundary conditions
\begin{equation}\label{MainProblem}
\begin{array}{|rcll}
- \Delta_x u &=& \lambda u & \quad\mbox{in }\Omega^\kappa\\[3pt]
u &=& 0 & \quad\mbox{on }\Sigma^\kappa \\[3pt]
\partial_\nu u  &=& 0 & \quad\mbox{on }\Gamma_1^\kappa\cup\Gamma_2^\kappa,
\end{array}
\end{equation}
where $\partial_\nu$ is the outward normal derivative on $\partial\Omega^\kappa$. Observe that by exchanging the axes and/or modifying their orientations, there is no loss of generality to restrict the analysis to the cases
\[
\kappa_1 \ge 0,\qquad |\kappa_2| \leq \kappa_1.
\]
Denote by $\mH_0^1(\Omega^\kappa ; \Sigma^\kappa)$ the Sobolev space of functions of $\mH^1(\Omega^\kappa)$ vanishing on $\Sigma^\kappa$. Classically (see e.g. \cite{Lad,LiMa68}), the variational formulation  of Problem \ef{MainProblem} writes
\begin{equation} \label{FV}
\begin{array}{|l}
\mbox{ Find }(\lambda,u)\in\bbR\times \mH_0^1(\Omega^\kappa ; \Sigma^\kappa)\setminus\{0\}\mbox{ such that}\\[3pt]
\,(\nabla_x u , \nabla_x \psi)_{\Omega^\kappa} = \lambda\,(u ,  \psi)_{\Omega^\kappa}
\qquad\forall \psi \in \mH_0^1(\Omega^\kappa ; \Sigma^\kappa),
\end{array}
\end{equation}
where for a domain $\Xi$, $(\cdot,\cdot)_\Xi$ stands for the inner product of the Lebesgue spaces $\mL^2(\Xi)$ or $(\mL^2(\Xi))^3$ according to the case. Integrating first with respect to the $x_3$ variable and using the homogeneous Dirichlet condition on $\Sigma^\kappa$ for the functions in $\mH_0^1(\Omega^\kappa ; \Sigma^\kappa)$, one can prove that there holds the Friedrichs inequality
\[
\Vert u ; \mL^2(\Omega^\kappa) \Vert^2 \leq c_\kappa  \Vert \nabla_x u ; 
\mL^2(\Omega^\kappa) \Vert^2\qquad\forall u\in \mH_0^1(\Omega^\kappa ; \Sigma^\kappa),
\]
where $c_\kappa>0$ is a constant which depends only on $\kappa$. As known e.g. from \cite[\S10.1]{BiSo87} or \cite[Ch.\,VIII.6]{RS78}, the variational problem (\ref{FV}) gives rise to the unbounded operator $A^\kappa$ of $\mL^2(\Omega^\kappa)$ such that
\[
\begin{array}{rccl}
A^\kappa:&\mathcal{D}(A^\kappa)&\quad\to&\quad\mL^2(\Omega^\kappa)\\[6pt]
 & u & \quad\mapsto&\quad A^\kappa u=-\Delta u,
\end{array}
\]
with $\mathcal{D}(A^\kappa):=\{u\in\mH_0^1(\Omega^\kappa ; \Sigma^\kappa)\,|\,\Delta u\in\mL^2(\Omega^\kappa)\mbox{ and }\partial_\nu u=0\mbox{ on }\Gamma_1^\kappa\cup\Gamma_2^\kappa\}$. The operator $A^\kappa$ is positive definite and selfadjoint. Since $\Omega^\kappa$ is unbounded, the embedding $\mH_0^1(\Omega^\kappa ; \Sigma^\kappa)\subset \mL^2(\Omega^\kappa)$ is not compact and $A^\kappa$ has a non-empty essential component $\sigma_{\mrm{ess}}(A^\kappa)$ (\cite[Thm.\,10.1.5]{BiSo87}). Note that the case
\[
\kappa_1 = \kappa_2 = 1
\]
plays a particular role. Indeed in this situation, if $u$ is an eigenfunction associated with an eigenvalue of $A^\kappa$, by extending $u$ via even reflections with respect to the faces $\Gamma_1^\kappa$, $\Gamma_2^\kappa$, one gets an eigenvalue of the Dirichlet Laplacian in the  Fichera layer $\mathscr{F}$ defined in (\ref{defF}). This latter problem has been studied in \cite{DaLO18,BaNa21}. More precisely, in \cite{DaLO18} the authors give a characterization of the essential spectrum of the Dirichlet Laplacian and show that the discrete spectrum has at most a finite number of eigenvalues. The existence of discrete spectrum is proved in \cite[Thm.\,2]{BaNa21}.\\
\newline
The goal of this paper is to get similar information for the operator $A^\kappa$ with respect to the parameter $\kappa$. In the present work, we will also show that the spectrum of the Dirichlet Laplacian in $\Om^\kappa$, i.e. with homogeneous Dirichlet boundary conditions everywhere on $\partial\Om^\kappa$, has a rather simple structure with only essential spectrum and no discrete spectrum.\\
\newline
This note is organized as follows. In Section \ref{sec2}, we describe the essential spectrum of $A^\kappa$ (Theorem \ref{T11}). Then in Section \ref{sec2bis}, we state the results for the discrete spectrum of $A^\kappa$ (the main outcome of the present work is Theorem \ref{T24}). The next four sections contain the proof of the different items of Theorem \ref{T24}. In Section \ref{SectionNum}, we illustrate the theory with some numericals results. Finally we establish the above mentioned result related to the Dirichlet Laplacian in $\Om^\kappa$ in the Appendix (Proposition \ref{DirichletLaplacianApp}).

\section{Essential spectrum}\label{sec2}

\begin{figure}[h!]
\centering
\begin{tikzpicture}
\filldraw[fill=gray!20,draw=none] (0,0) -- ++(6,0) -- ++(0,2) -- ++(-5,0)--cycle;
\draw (6,0) -- ++(-6,0) -- ++(1,2) -- ++(5,0);
\draw[dashed] (6,0)-- ++(0,2);
\draw[->] (0,0) -- (1,0) node[anchor=north]{$\xi_1$};
\draw[->] (0,0) -- (0,1) node[anchor=east]{$\xi_2$};
\draw[<-] (0.55,1) --++ (0.6,0) node[anchor=west]{$\xi_1=\xi_2\,\tan\alpha_1$};
\begin{scope}[xshift=-2cm]
\draw[->] (0,0) -- (1,0) node[anchor=north]{$e_1$};
\draw[->] (0,0) -- (0,1) node[anchor=east]{$e_3$};
\draw[-] (0,0) -- (0,0) node[anchor=north east]{$e_2$};
\draw (0,0) circle (0.8ex);
\draw (-0.09,-0.09) -- (0.09,0.09);
\draw (0.09,-0.09) -- (-0.09,0.09);
\end{scope}
\draw[<-] (0,0) ++(90:.6) arc (90:65:.6) node[anchor=south] {\tiny $\alpha_1$}; 
\end{tikzpicture}
\caption{Domain $\Pi^{\alpha_1}$ corresponding with a cut of the blade $\mathscr{B}^{\kappa_1}$ in the plane $x_2=0$.}\label{Figure2D}
\end{figure}
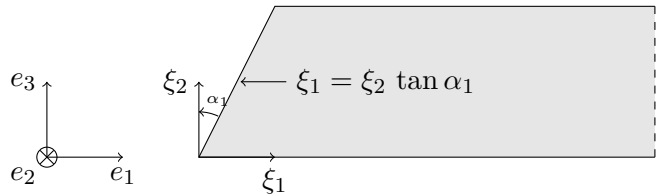

Introducing the angle $\alpha_1\in[0,\pi/2)$ such that $\kappa_1=\tan\alpha_1$, the blade \ef{00} can also be defined as 
\beas
\mathscr{B}^{\,\tan\alpha_1} = \{(x_1,x_2,x_3)\in\bbR^3\,|\, (x_1,x_3)\in\Pi^{\alpha_1}\}
\eeas
where $\Pi^{\alpha_1}$ stands for the 2D pointed strip
\begin{equation}\label{PointedStrip}
\Pi^{\alpha_1} := \big\{ \xi = (\xi_1,\xi_2) \in \bbR^2 \,|\,
\xi_1 > \xi_2\,\tan\alpha_1,\,\xi_2 \in (0,1) \big\} 
\end{equation}
(see Figure \ref{Figure2D}). To describe $\sigma_{\mrm{ess}}(A^\kappa)$,
we need information on the spectrum of the auxiliary planar problem
\begin{equation}\label{pb2D}
\begin{array}{|rcll}
- \Delta_\xi v  &=& \mu v & \quad\mbox{in }\Pi^{\alpha_1} \\[2pt]
v &=& 0 & \quad\mbox{on }\varsigma^{\alpha_1}\\[2pt]
\partial_{\nu} v&=& 0 & \quad\mbox{on }\gamma^{\alpha_1}
\end{array}
\end{equation}
where $\varsigma^{\alpha_1} := \{ \xi\in\partial\Pi^{\alpha_1} \, | \, \xi_2 = 0\mbox{ or }\xi_2 = 1 \}$ denotes the horizontal part of $\partial\Pi^{\alpha_1}$ and  $\gamma^{\alpha_1} := \partial\Pi^{\alpha_1}\setminus\overline{\varsigma^{\sigma}}$ stands for the oblique part of $\partial\Pi^{\alpha_1}$.\\
\newline
The continuous spectrum of Problem (\ref{pb2D}) coincides with the ray $[\pi^2,+\infty)$. When $\alpha_1 = 0$ (straight end), working with the decomposition in Fourier series in the vertical direction, one can prove that the discrete spectrum is empty. On the other hand, for all $\alpha_1 \in (0,\pi/2)$, it has been shown in \cite{KaNa00} that there is at least one eigenvalue below the continuous spectrum (see also
\cite{Naza12a} for more general shapes). Notice that by extending $\Pi^{\alpha_1}$ by reflection with respect to $\gamma^{\alpha_1}$, we obtain a broken strip that we can also call a V-shaped domain. This allows us to exploit all the results from \cite{ExL,na561} (see also \cite{DaRa12} as well as the amendments in \cite{Naza14c}) to get information on $\mu_1^{\alpha_1}$, the smallest eigenvalue of (\ref{pb2D}). In particular, the function $\alpha_1\mapsto \mu_1^{\alpha_1}$ is smooth and strictly decreasing on $(0,\pi/2)$. Additionally, we have 
\begin{equation}\label{defAlpha1}
\lim_{\alpha_1 \to 0^+ } \mu_1^{\alpha_1} =  \pi^2,\qquad\qquad
\lim_{\alpha_1 \to (\pi/2)^-} \mu_1^{\alpha_1} =  \frac{\pi^2}{4}
\end{equation}
(see Figure \ref{BoundEssentialSpectrum} for a numerical approximation of $\alpha_1\mapsto\mu_1^{\alpha_1}$). By adapting the approach proposed in \cite[\S3.1]{DaLO18}, one establishes the next assertion. The only point to be commented here is that there holds
\bea\label{p5}
\lambda_\dagger^\kappa := \mu_1^{\arctan\kappa_1} \leq
 \mu_1^{\arctan|\kappa_2|}  
\eea
because $|\kappa_2| \leq \kappa_1$ implies $\arctan|\kappa_2|\le \arctan\kappa_1$ and because $\alpha_1\mapsto \mu_1^{\alpha_1}$ is decreasing. 

\BET\label{T11}
The essential spectrum $\sigma_{\rm ess}(A^\kappa)$ of the operator $A^\kappa$ coincides with the ray $[\lambda_\dagger^\kappa,+\infty)$ where $\lambda_\dagger^\kappa$ is defined in \ef{p5}. 
\ENT
\begin{remark}
Thus the lower bound of $\sigma_{\rm ess}(A^\kappa)$ is characterized by the sharpest edge of $\Omega^\kappa$.
\end{remark}

\section{Discrete spectrum}\label{sec2bis}

For the discrete spectrum $\sigma_{\rm d}(A^\kappa)$, our main results are as follows:

\BET\label{T23}
For $\kappa_1=\kappa_2=0$ (straight edges), $\sigma_{\rm d}(A^\kappa)$ is empty. 
\ENT 

\BET\label{T24}
Assume that $\kappa_1 > 0$.\\[3pt]
$\textit{1)}$ $\sigma_{\rm d}(A^\kappa)$ is non-empty for $\kappa_2\in[-\kappa_1,0)$.\\
\newline
$\textit{2)}$ There exists $h(\kappa_1)\in(0,\kappa_1)$ such that:\\[3pt]
$~\hspace{2cm}i)$ $\sigma_{\rm d}(A^\kappa)$ is empty for $\kappa_2\in[0,h(\kappa_1)]$;\\[2pt]
$~\hspace{2cm}ii)$ $\sigma_{\rm d}(A^\kappa)$ is non-empty for $\kappa_2\in (h(\kappa_1),\kappa_1]$.\\
\newline
$\textit{3)}$ For $\kappa_2\in [-\kappa_1,0)\cup(h(\kappa_1),\kappa_1]$, denote by $\lambda_1^\kappa$ the first (smallest) eigenvalue of $\sigma_{\rm d}(A^\kappa)$.\\ 
The function $\kappa_2\mapsto \lambda_1^\kappa$ is strictly increasing on $[-\kappa_1,0)$ and strictly decreasing on $(h(\kappa_1),\kappa_1]$.\\
\newline
$\textit{4)}$ For $\kappa_2\in(-\kappa_1,\kappa_1)$, $\sigma_{\rm d}(A^\kappa)$ contains at most a finite number of eigenvalues. 
\ENT 

\noindent The items $\textit{1)-3)}$ of Theorem \ref{T24} are illustrated by Figure \ref{BehaviourEigen}. Note in particular that we have the following mechanism for positive $\kappa_2$: diminishing $\kappa_2$ from the value $\kappa_1$ makes the eigenvalue $\lambda_1^\kappa$ to reach the threshold $\lambda_\dagger^{\kappa}
= \mu_1^{\arctan\kappa_1}$ at a certain $\kappa_2 =  h(\kappa_1)
\in (0,\kappa_1)$. Theorem \ref{T23} can be established quite straightforwardly by working with symmetries. The rest of the present note is dedicated to the proof of the statements of Theorem \ref{T24}.

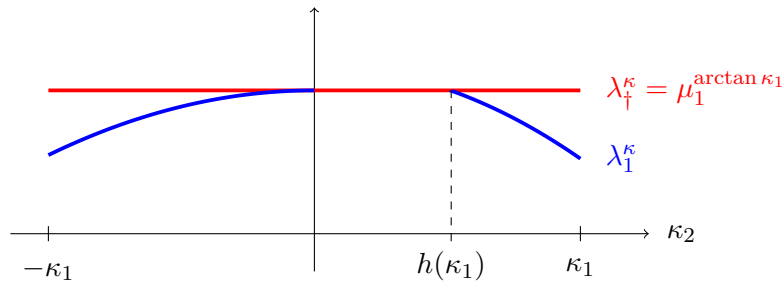
\begin{figure}[h!]
\centering
\begin{tikzpicture}
\draw[-] (-3.5,-0.1) -- (-3.5,0.1);
\draw[-] (3.5,-0.1) -- (3.5,0.1);
\draw (-3.5,-0.2) node[anchor=north]{$-\kappa_1$};
\draw (3.5,-0.2) node[anchor=north]{$\kappa_1$};
\draw (4.5,0) node[anchor=west]{$\kappa_2$};
\draw (3.7,1.9) node[anchor=west]{\textcolor{red}{$\lambda_\dagger^{\kappa}= \mu_1^{\arctan\kappa_1}$}};
\draw (3.7,1) node[anchor=west]{\textcolor{blue}{$\lambda_1^\kappa$}};
\draw[->] (-4,0) -- (4.4,0);
\draw[->] (0,-0.5) -- (0,3);
\draw (1.8,-0.1) node[anchor=north]{$h(\kappa_1)$};
\draw[domain=-3.5:3.5, smooth, variable=\x, red,line width=0.5mm] plot ({\x}, {2-0.1});
\draw[domain=-3.5:0, smooth, variable=\x, blue,line width=0.5mm] plot ({\x}, {1.9-0.07*\x*\x});
\draw[domain=1.8:3.5, smooth, variable=\x, blue,line width=0.5mm] (1.8,1.9)--(1.8,1.9) -- plot ({\x}, {2.22-0.1*\x*\x});
\draw[dashed] (1.8,-0.1) -- (1.8,1.9);
\end{tikzpicture}
\caption{Schematic picture of the behaviour of $\lambda_1^\kappa$, the smallest eigenvalue of $\sigma_{\rm d}(A^\kappa)$, for a given $\kappa_1>0$ and $\kappa_2\in[-\kappa_1,0)\cup(h(\kappa_1),\kappa_1]$.}\label{BehaviourEigen}
\end{figure}

\section{Discrete spectrum for negative $\kappa_2$}\label{sec3}

\begin{figure}[h!]
\centering

\begin{tikzpicture}[fill opacity=0.8,draw,scale=2.5]
\filldraw[fill=white] (-0.2,0,0) -- ++(0,0,1) -- ++(-0.3,0.5,0) -- ++(0, 0, -0.7) -- cycle;
\filldraw[fill=white] (-0.2,0,0) -- ++(0,0,1) -- ++(0,0.5,0) -- ++(0, 0, -0.7) -- cycle;
\filldraw[fill=gray!20] (-0.2,0,1) -- ++(0,0.5,0) -- ++(-0.3,0,0) -- cycle;
\filldraw[fill=white] (-0.2,0.5,1) -- ++(-0.3,0,0)  -- ++(0,0,-0.7) --++ (0.3,0,0) -- cycle;
\filldraw[fill=white] (0,0,0) -- ++(1.5,0,0) -- ++(0,0.5,0.3) -- ++(-1.5, 0, 0) -- cycle;
\filldraw[fill=white] (0,0,0) -- ++(0,0,1) -- ++(0,0.5,0) -- ++(0, 0, -0.7) -- cycle;
\filldraw[fill=gray!20] (0,0,1) -- ++(1.5,0,0) -- ++(0,0.5,0) -- ++(-1.5, 0, 0) -- cycle;
\filldraw[fill=gray!20] (1.5,0,0) -- ++(0,0,1) -- ++(0,0.5,0) -- ++(0, 0, -0.7) -- cycle;
\filldraw[fill=white] (1.5,0.5,0.3) -- ++(0,0,0.7) -- ++(-1.5,0,0) -- ++(0, 0, -0.7) -- cycle;
\draw[gray!20,line width=0.3mm] (1.5,0.01,1)-- ++(0,0.48,0);
\draw[dashed] (1.5,0,1)-- ++(0,0.5,0);
\draw[->] (0,0,0) -- (0.25,0,0) node[anchor=west]{$e_2$};
\draw[->] (0,0,0) -- (0,0.25,0) node[anchor=west]{$e_3$};
\draw[->] (0,0,0) -- (0,0,0.25) node[anchor=west]{$e_1$};
\node at (-0.2,0,1.2) [anchor=north] {$\Omega_-^\kappa$};
\node at (1,0,1.2) [anchor=north] {$\Omega_+^\kappa$};
\end{tikzpicture}\vspace{-0.4cm}
\caption{Domains $\Omega_-^\kappa$ and $\Omega_+^\kappa$.}\label{Figure3DDecoupage}
\end{figure}
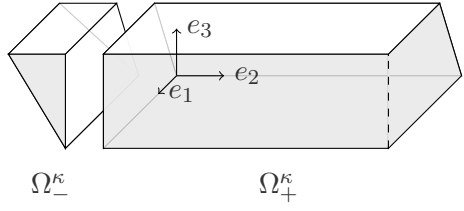

In this section, we prove the item $\textit{1)}$ of Theorem \ref{T24} and so we consider the case $\kappa_2< 0$. A direct application of the minimum principle, see e.g. \cite[Thm.\,10.2.1]{BiSo87}, \cite[Thm.\,XIII.3]{RS78}, shows that the discrete spectrum of $A^\kappa$ contains an eigenvalue $\lambda_1^\kappa$ if one can find a trial function
$\psi \in\mH_0^1(\Omega^\kappa ; \Sigma^\kappa)$ such that
\bea
\Vert \nabla_x \psi ;\mL^2(\Omega^\kappa) \Vert^2 < \lambda_\dagger^\kappa
\,\Vert \psi ;\mL^2(\Omega^\kappa) \Vert^2 . \label{11}
\eea
Let us construct a function satisfying \ef{11}. To proceed, first divide the incisor $\Omega^\kappa$ into the two domains 
\begin{equation}\label{v1}
\Omega_-^\kappa := \{ x \in \Omega^\kappa \,|\, x_2 < 0\},\qquad\quad\Omega_+^\kappa := \{ x \in \Omega^\kappa \,|\, x_2 > 0\} = \Omega^{(\kappa_1,0)}
\end{equation}
(see Figure \ref{Figure3DDecoupage}). Then for $\eps>0$ small, define $\psi^\eps$ such that
\begin{equation}\label{12}
\psi^\eps(x) = \begin{array}{|ll}
v(x_1,x_3)&\mbox{ in }\Omega_-^\kappa\\[3pt]
e^{-\eps x_2} v(x_1,x_3) &\mbox{ in }\Omega_+^\kappa
\end{array}
\end{equation}
where $v$ is an eigenfunction of the 2D problem (\ref{pb2D}) associated with $\mu_1^{\alpha_1}$, the smallest eigenvalue, and $\alpha_1 = \arctan\kappa_1$. To set ideas, we choose $v$ such that $\Vert  v;\mL^2( \Pi^{\alpha_1}) \Vert=1$. Note that $\psi^\eps$ satisfies the homogeneous Dirichlet condition on $\Sigma^\kappa$ and decays exponentially at infinity. Using (\ref{p5}), we obtain
\begin{equation}\label{Terme1}
\begin{array}{ll}
 & \Vert \nabla_x \psi^\eps;\mL^2( \Omega_+^\kappa) \Vert^2 
- \lambda_\dagger^\kappa\,\Vert \psi^\eps;\mL^2( \Omega_+^\kappa) \Vert^2 \\[6pt]
=&\dsp\big( \Vert \nabla_\xi v;\mL^2( \Pi^{\alpha_1}) \Vert^2 
+ (\eps^2 - \mu_1^{\alpha_1}) 
\Vert  v;\mL^2( \Pi^{\alpha_1}) \Vert^2  \big)\int_0^\infty e^{-2 \eps x_2} dx_2 
 = \frac{\eps}{2}\,.
\end{array}
\end{equation}
As for the integral over the prism $\Omega_-^\kappa$ with triangular cross-sections and
the bevelled end, we integrate by parts and take into account the boundary conditions of (\ref{MainProblem}), which yields
\begin{equation}\label{Terme3}
\begin{array}{ll}
\Vert \nabla_x \psi^\eps;\mL^2( \Omega_-^\kappa) \Vert^2 
- \lambda_\dagger^\kappa\,\Vert \psi^\eps;\mL^2( \Omega_-^\kappa) \Vert^2 
=&\hspace{-0.2cm}- \dsp\int_{\Omega_-^\kappa} v (x_1,x_3) 
\big( \Delta_x v (x_1,x_3)  + \mu_1^{\alpha_1}   v (x_1,x_3)\big)\,dx \\[10pt]
&+\dsp\int_{\Gamma_1^\kappa} v (x_1,x_3) \partial_{\nu} 
v (x_1,x_3)\,ds =: I_{\Omega_-^\kappa} + I_{\Gamma_1^\kappa}. 
\end{array}
\end{equation}
Owing to \ef{pb2D}, there holds $I_{\Omega_-^\kappa}=0$. Now we focus our attention on the term $I_{\Gamma_1^\kappa}$. Let $(e_1,e_2,e_3)$ denote the canonical basis of $\bbR^3$. Set $\alpha_2:=\arctan\kappa_2\in(-\pi/2,0)$ and define the new orthonormal basis $(\tilde{e}_1,\tilde{e}_2,\tilde{e}_3)$ with
\begin{equation}\label{newBasis}
\tilde{e}_1=e_1;\qquad\qquad\tilde{e}_2=\cos(\alpha_2)e_2-\sin(\alpha_2)e_3;\qquad\qquad \tilde{e}_3=\sin(\alpha_2)e_2+\cos(\alpha_2)e_3.
\end{equation}
Observe that the component $\Gamma^\kappa_1$ of the boundary of the incisor $\Om^\kappa$ is included in the plane $(O,\tilde{e}_1,\tilde{e}_3)$. 

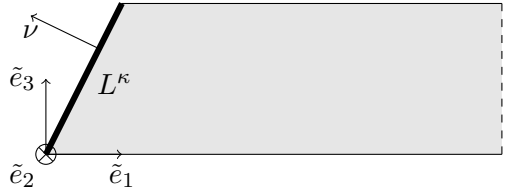
\begin{figure}[h!]
\centering
\begin{tikzpicture}
\filldraw[fill=gray!20,draw=none] (0,0) -- ++(6,0) -- ++(0,2) -- ++(-5,0)--cycle;
\draw (6,0) -- ++(-6,0) -- ++(1,2) -- ++(5,0);
\draw[dashed] (6,0)-- ++(0,2);
\draw[line width=0.8mm] (0,0)-- ++(1,2);
\node at (0.9,1.2) [anchor=north] {$L^\kappa$};

\begin{scope}[xshift=7mm,yshift=14mm]
\draw[->,rotate=154] (0,0) -- (1,0) node[anchor=north]{$\nu$};
\end{scope}
\begin{scope}[xshift=0cm]
\draw[->] (0,0) -- (1,0) node[anchor=north]{$\tilde{e}_1$};
\draw[->] (0,0) -- (0,1) node[anchor=east]{$\tilde{e}_3$};
\draw[-] (0,0) -- (0,0) node[anchor=north east]{$\tilde{e}_2$};
\draw (0,0) circle (0.8ex);
\draw (-0.09,-0.09) -- (0.09,0.09);
\draw (0.09,-0.09) -- (-0.09,0.09);
\end{scope}
\end{tikzpicture}
\caption{Domain $\Gamma_1^\kappa$ in the plane $(O,\tilde{e}_1,\tilde{e}_3)$.}\label{Figure2DGamma1}
\end{figure}

\noindent Let $(\tilde{x}_1,\tilde{x}_2,\tilde{x}_3)$ denote the coordinates in the basis (\ref{newBasis}). We have
\begin{equation}\label{Terme0}
I_{\Gamma_1^\kappa} = -\cfrac{1}{2}\,\int_{\Gamma_1^\kappa}  \frac{\partial(v^2)}{
\partial \tilde{x}_2}\,d\tilde{x}_1d\tilde{x}_3.
\end{equation}
Using that $v$ is independent of $x_2$, we obtain
\begin{equation}\label{Terme2}
0=\frac{\partial(v^2)}{
\partial x_2}=\cos\alpha_2\frac{\partial(v^2)}{
\partial \tilde{x}_2}+\sin\alpha_2\frac{\partial(v^2)}{
\partial \tilde{x}_3}\,.
\end{equation}
Combining (\ref{Terme2}) and (\ref{Terme0}), we find
\begin{equation}\label{CalculInt}
I_{\Gamma_1^\kappa} = \cfrac{\tan\alpha_2}{2}\int_{\Gamma_1^\kappa}  \frac{\partial(v^2)}{\partial \tilde{x}_3}\,d\tilde{x}_1d\tilde{x}_3=\cfrac{\tan\alpha_2}{2}\int_{\partial\Gamma_1^\kappa} v^2\,\nu\cdot\tilde{e}_3\,d\ell=\cfrac{\tan\alpha_2}{2}\int_{L^\kappa} v^2\,\nu\cdot\tilde{e}_3\,d\ell,
\end{equation}
where $L^\kappa := \{ x\in\bbR^3 \,|\, x_j = \kappa_j x_3, \ j= 1,2, \ x_3 \in (0,1) \}$ and where $\nu$ stands for the outward unit normal vector to $\partial\Gamma_1^\kappa$ (in the plane $(O,\tilde{e}_1,\tilde{e}_3)$). Using that $\alpha_2\in(-\pi/2,0)$, $\nu\cdot\tilde{e}_3>0$ on $L^\kappa$ (see Figure \ref{Figure2DGamma1}) and $v\not\equiv0$ on $L^\kappa$, we deduce that $I_{\Gamma_1^\kappa}<0$. Note also that the quantity $I_{\Gamma_1^\kappa}$ is independent of $\eps$. Gathering (\ref{Terme1}) and (\ref{Terme3}), we infer that the inequality \ef{11} holds for $\eps>0$ small enough. This is enough to guarantee that $\sigma_{\rm d}(A^\kappa)$ is non-empty for negative $\kappa_2$.

\section{Absence of eigenvalues for small positive $\kappa_2$} \label{sec4}

The goal of this section is to prove an intermediate result to establish the item $\textit{2)}$ of Theorem \ref{T24}. Therefore we assume that $\kappa_2 \ge 0$. In that situation, the integral $I_{\Gamma_1^\kappa}$ in (\ref{CalculInt}) is positive because $\alpha_2\in[0,\pi/2)$ and our argument of the previous section does not work for showing the existence of discrete spectrum. Of course, this does not yet guarantee that $\sigma_{\rm d}(A^\kappa)$ is empty. Actually we will see in Section \ref{sec5} that $\sigma_{\rm d}(A^\kappa)$
is non-empty for certain $\kappa$ with $\kappa_2> 0$. For the moment, combining the calculations of Section \ref{sec3} with the approach of \cite{na457}, we show the following result.

\BEP \label{P23}
For all $\kappa_1> 0$, there exists $\delta (\kappa_1) > 0$ such that $\sigma_{\rm d}(A^\kappa)$ is empty for 
\bea \label{15}
\kappa_2 \in[0,\delta(\kappa_1)).
\eea
\ENP

\begin{proof}
~\vspace{-0.5cm}
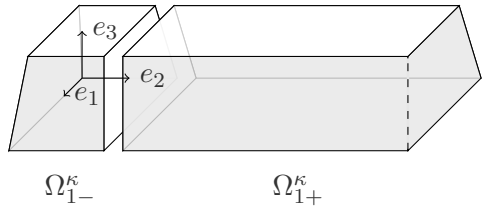
\begin{figure}[h!]
\centering
\begin{tikzpicture}[fill opacity=0.8,draw,scale=2.5]
\filldraw[fill=white] (-0.6,0,0) -- ++(0.5,0,0) -- ++(0,0.5,0.3) -- ++(-0.4,0,0) -- cycle;
\filldraw[fill=white] (-0.6,0,0) -- ++(0,0,1) -- ++(0.1,0.5,0) -- ++(0, 0, -0.7) -- cycle;
\filldraw[fill=white] (-0.1,0,0) -- ++(0,0,1) -- ++(0,0.5,0) -- ++(0, 0, -0.7) -- cycle;
\filldraw[fill=gray!20] (-0.6,0,1) -- ++(0.5,0,0) -- ++(0,0.5,0) -- ++(-0.4,0,0) -- cycle;
\filldraw[fill=white] (-0.1,0.5,1) -- ++(-0.4,0,0)  -- ++(0,0,-0.7) --++ (0.4,0,0) -- cycle;
\filldraw[fill=white] (0,0,0) -- ++(1.5,0,0) -- ++(0,0.5,0.3) -- ++(-1.5, 0, 0) -- cycle;
\filldraw[fill=white] (0,0,0) -- ++(0,0,1) -- ++(0,0.5,0) -- ++(0, 0, -0.7) -- cycle;
\filldraw[fill=gray!20] (0,0,1) -- ++(1.5,0,0) -- ++(0,0.5,0) -- ++(-1.5, 0, 0) -- cycle;
\filldraw[fill=gray!20] (1.5,0,0) -- ++(0,0,1) -- ++(0,0.5,0) -- ++(0, 0, -0.7) -- cycle;
\filldraw[fill=white] (1.5,0.5,0.3) -- ++(0,0,0.7) -- ++(-1.5,0,0) -- ++(0, 0, -0.7) -- cycle;
\draw[gray!20,line width=0.3mm] (1.5,0.01,1)-- ++(0,0.48,0);
\draw[dashed] (1.5,0,1)-- ++(0,0.5,0);
\draw[->] (-0.6,0,0) --++ (0.25,0,0) node[anchor=west]{$e_2$};
\draw[->] (-0.6,0,0) --++ (0,0.25,0) node[anchor=west]{$e_3$};
\draw[->] (-0.6,0,0) --++ (0,0,0.25) node[anchor=west]{$e_1$};
\node at (-0.2,0,1.2) [anchor=north] {$\Omega_{1-}^\kappa$};
\node at (1,0,1.2) [anchor=north] {$\Omega_{1+}^\kappa$};
\end{tikzpicture}\vspace{-0.4cm}
\caption{Domains $\Omega_{1-}^\kappa$ and $\Omega_{1+}^\kappa$.}\label{Figure3DDecoupage1}
\end{figure}

\noindent Fix $\kappa_2\in[0,\min(1,\kappa_1))$ and divide $\Omega^\kappa$ into the two domains
\bea\label{Om10}
\Omega_{1-}^\kappa := \{ x \in \Omega^\kappa \,|\, x_2< 1\}\qquad\mbox{ and }\qquad\Omega_{1+}^\kappa := \{ x \in \Omega^{\kappa} \,|\, x_2 > 1\}
\eea
(see Figure \ref{Figure3DDecoupage1}). Since $\Omega_{1+}^\kappa=\{(x_1,x_2,x_3)\in\bbR^3\,|\, (x_1,x_3)\in\Pi^{\arctan\kappa_1},\,x_2>1\}$, there holds
\bea\label{16}
\Vert \nabla_x \psi ;\mL^2(\Omega_{1+}^\kappa) \Vert^2 \geq \lambda_\dagger^\kappa\,\Vert  \psi ;\mL^2(\Omega_{1+}^\kappa) \Vert^2  
\eea
for all  $\psi \in\mH_0^1(\Omega^\kappa; \Sigma^\kappa)$. Below we show that there is some $\varrho > 0$ such that for $\kappa_2$ small enough, there holds, for all $\psi \in\mH_0^1(\Omega^\kappa; \Sigma^\kappa)$, 
\bea\label{17}
\Vert \nabla_x \psi ;\mL^2(\Omega_{1-}^\kappa) \Vert^2 \geq (\lambda_\dagger^\kappa + \kappa_2 \varrho)
\Vert  \psi ;\mL^2(\Omega_{1-}^\kappa) \Vert^2. 
\eea
Combining (\ref{16}) and (\ref{17}) with the minimum
principle yields the result of the proposition. 

\BER  
Note that estimate \ef{16} implies that $\sigma_{\rm d}(A^\kappa)$
is empty for all $\kappa=(\kappa_1,0)$ with $\kappa_1\ge0$. 
\ENR
\noindent In the remaining part of the proof, we establish \ef{17}. Consider the mixed boundary-value problem
\begin{equation}\label{18} 
\begin{array}{|rcll}
- \Delta_x w  &=& \tau w & \mbox{ in }\Omega_{1-}^\kappa\\
w  &=& 0 &\mbox{ on }\{x\in\partial \Omega_{1-}^\kappa\,|\,x_3= 0\mbox{ or }x_3=1\} 
 \\
\partial_{\nu} w  &=& 0 & \mbox{ on } \{x\in\partial \Omega_{1-}^\kappa\,|\,\ x_3\in (0,1)\}. 
\end{array}
\end{equation}
As pictured in Figure \ref{Figure3DDecoupage1} left, the domain $\Omega_{1-}^\kappa$ in \ef{Om10} is a semi-infinite prism with a trapezoidal cross-section and a skewed end. When $\kappa_2=0$, the trapezoid is simply the unit square and the
continuous spectrum of the problem \ef{18} coincides with the ray $[\pi^2,+\infty)$. In that situation, the problem (\ref{18}) admits an eigenvalue at $\mu_1^{\alpha_1} \in (0,\pi^2)$ with $\alpha_1 = \arctan\kappa_1$ (see the text above (\ref{defAlpha1}) for the definition of that quantity), a corresponding eigenfunction being $w$ such that 
\[
w(x)=v(x_1,x_3),
\]
where $v$ is an eigenfunction of (\ref{pb2D}) associated with $\mu_1^{\alpha_1}$. Now let us consider the situation $\kappa_2>0$ small. Then the map
\bea
\Omega_{1-}^\kappa \ni x \mapsto \bigg(x_1,\cfrac{x_2 - \kappa_2 x_3}{1-\kappa_2 x_3},x_3 \bigg) \in \Omega_{1-}^{(\kappa_1,0)}
\label{0x}
\eea
is a diffeomorphism whose Jacobian matrix is close to the identity and whose Hessian matrix is small. Using these properties, we deduce that the discrete spectrum of the problem \ef{18} is still non-empty for $\kappa_2$ small enough. This comes from the fact that the cut-off point of the essential spectrum satisfies the estimate
\beas
| \tau_\dagger^{\kappa_2} - \pi^2 | \leq C_\dagger \kappa_2
\eeas
and the first (smallest) eigenvalue of the discrete spectrum, which is simple\footnote{Since $\Omega_{1-}^\kappa$ is unbounded, we cannot directly apply the classical Krein-Rutman theorem to prove that the first eigenvalue $\tau_1^{\kappa_2}$ of (\ref{18}) is simple. However, this can be established for example by exploiting that the eigenfunctions associated with $\tau_1^{\kappa_2}$ are exponentially decaying at infinity and by approximating them by eigenfunctions of operators set in bounded domains (where we can apply the Krein-Rutman theorem). With this, we show that each eigenfunction associated with $\tau_1^{\kappa_2}$ is either non-negative or non-positive in $\Omega_{1-}^\kappa$, which is possible only if $\tau_1^{\kappa_2}$ is a simple eigenvalue. In the proof, one needs to use the fact that eigenfunctions cannot vanish on sets of positive area owing to the theorem on unique continuation.}, admits the expansion 
\bea\label{22}
\tau_1^{\kappa_2} = \mu_1^{\alpha_1} + \kappa_2 \tau_1' + \tilde{\tau}_1^{\kappa_2}    
\eea
with $|\tilde{\tau}_1^{\kappa_2}| \leq C\,\kappa_2^2$. Here $C>0$ is a constant independent of $\kappa_2$. These properties can be justified using classical results of the perturbation theory for linear operators, see e.g. \cite[Ch.\,7]{Kato95}, \cite[Ch.\,10]{BiSo87},
\cite[Ch.\,XII]{RS78}. From the minimum principle, to establish (\ref{17}), we see that it suffices  to show that
\bea\label{DerivPos}
\tau_1' = \frac{d \tau_1^{\kappa_2}}{d \kappa_2}\Big|_{\kappa_2=0}>0.
\eea
Let $w_1^{\kappa_2}$ be an eigenfunction of Problem (\ref{18}) associated with $\tau_1^{\kappa_2}$. Together with (\ref{22}), consider the asymptotic ansatz
\begin{equation}\label{ExpansionPos}
w_1^{\kappa_2} (x) = v(x_1,x_3) + \kappa_2 w_1'(x) + \tilde{w}_1^{\kappa_2} (x)
\end{equation}
where $\tilde{w}_1^{\kappa_2}$ is a small remainder. Insert (\ref{22}), (\ref{ExpansionPos}) into \ef{18} and collect the terms of order $\kappa_2$. We obtain
\bea\label{22}
\begin{array}{|rcll}
- \Delta_x w_1' - \mu_1^{\alpha_1} w_1' &=& \tau_1' v& \mbox{ in }\Omega_{1-}^{(\kappa_1,0)}\\
w_1' &=& 0 & \mbox{ on }\{x\in\partial \Omega_{1-}^\kappa\,|\,x_3= 0\mbox{ or }x_3=1\} . 
\end{array}
\eea
As for the Neumann boundary condition of (\ref{18}), using in particular that on $\Gamma_1^\kappa$, 
\[
\partial_{\nu}\cdot = (1 + \kappa_2^2)^{-1/2}\, \Big( - \frac{\partial\cdot}{
\partial x_2} + \kappa_2 \frac{\partial\cdot}{\partial x_3}\,\Big),
\]
at order $\kappa_2$, we find 
\bea
- \frac{\partial w_1'}{\partial x_2	}(x_1,0,x_3) = 
- \frac{\partial v}{\partial x_3	}(x_1,x_3), \qquad\quad
\frac{\partial w_1'}{\partial x_2	}(x_1,1,x_3) = 0,\qquad\quad(x_1,x_3) \in \Pi^{\alpha_1}.
\label{26}
\eea
Since the smallest eigenvalue $\mu_1^{\alpha_1}$ is simple, there exists only one compatibility condition to satisfy to ensure that the problem \ef{22}--\ef{26} has a non trivial solution. It can be written as
\[
\begin{array}{ll}
& \tau_1' = \tau_1' \Vert v ;\mL^2(\Pi^{\alpha_1})\Vert^2  \\[8pt]
=&\dsp - \int_{\Omega_{1-}^{(\kappa_1,0)}} v ( \Delta_x w_1' + \mu_1^{\alpha_1} w_1')\,dx  = \int_{\Gamma_1^{(\kappa_1,0)} } v(x_1,x_3) 
\frac{\partial w_1'}{\partial x_2} (x_1,0,x_3)\,ds \\[10pt]
=& \dsp\int_{\Pi^{\alpha_1}} v(\xi_1,\xi_2) \frac{\partial 
v}{\partial \xi_2}(\xi_1,\xi_2)\,d\xi_1d\xi_2
= \frac12 \cos \alpha_1 \int_{L^{(\kappa_1,0)}} v ^2 d \ell > 0
\end{array}
\]
where $L^{(\kappa_1,0)} := \{ (\xi_1,\xi_2)\in\bbR^2 \,|\, \xi_1 = \xi_2\tan\alpha_1,\,\xi_2 \in (0,1) \}$. This shows (\ref{DerivPos}) which guarantees that estimate (\ref{17}) is valid according to the minimum principle. Therefore the proof of Proposition \ref{P23} is complete. 
\end{proof}

\section{Existence of eigenvalues for $\kappa_2$ close to $\kappa_1>0$}
\label{sec5}

We start this section by proving that the discrete spectrum $\sigma_{\rm d}(A^\kappa)$ of the operator $A^\kappa$ can also be non-empty for certain positive $\kappa_2$. This happens
for example in the case $\kappa_1 = \kappa_2$, which we now assume. We adapt the proof of \cite[Thm.\,2]{BaNa21} and exhibit a function 
$\varphi \in\mH_0^1 (\Omega^\kappa ; \Sigma^\kappa) $ satisfying \ef{11}. First, note that for $\kappa_1 = \kappa_2$, the domain $\Omega^\kappa$ is  symmetric with respect to the ``bisector'' cross-section
\beas
\Upsilon^\kappa := \{ x \in \Omega^\kappa \,|\, x_1 =x_2\}.
\eeas
Let us divide $\Omega^\kappa$ into the two congruent domains
\bea
\Omega_\wedge^\kappa := \{ x \in \Omega^\kappa \,|\, x_1 > x_2 \}
\qquad\mbox{ and }\qquad\Omega_<^\kappa := \{ x \in \Omega^\kappa \, | \, x_2 > x_1 \}. 
\eea
Accordingly, we set  
\begin{equation}\label{14}
\psi^\eps(x) = \begin{array}{|ll}
e^{-\eps x_1} v(x_2,x_3) & \mbox{ in }\Omega_\wedge^\kappa \\[3pt]
e^{-\eps x_2} v(x_1,x_3) & \mbox{ in }\Omega_<^\kappa 
\end{array}
\end{equation}
where $v$ is as in \ef{12}. Since $\psi^\eps$ is continuous on $\Upsilon^\kappa$
and decays exponentially at infinity, it belongs to $\mH_0^1(\Omega^\kappa; \Sigma^\kappa)$. Moreover, we have
\begin{equation}\label{CalculTermeG}
\begin{array}{ll}
 &\Vert \nabla_x \psi^\eps ;\mL^2(\Omega_\wedge^\kappa) \Vert^2 - \mu_1^{\alpha_1}
\Vert \psi^\eps ;\mL^2(\Omega_\wedge^\kappa) \Vert^2 \\[10pt]
=& - \dsp\int_{\Omega_\wedge^\kappa}\hspace{-0.2cm} e^{- 2 \eps x_1} v(x_2,x_3) 
\big( \Delta_x v(x_2,x_3) + (\mu_1^{\alpha_1} + \eps^2) v(x_2,x_3)\big)\,dx \dsp\\[10pt]
 & +\dsp\int_{\Upsilon^\kappa}\hspace{-0.2cm}e^{-  \eps x_1} v(x_2,x_3) 
\partial_{\nu} \big( e^{-\eps x_1}   v(x_2,x_3) \big)\,ds =: I_{\Omega^{\kappa}}^\eps + I_{\Upsilon^{\kappa}}^\eps. 
\end{array}
\end{equation}
Using that $v$ solves \ef{pb2D}, we get $I_{\Omega^{\kappa}}^\eps = O(\eps)$. Now consider the integral $I_{\Upsilon^{\kappa}}^\eps$. Define the new orthonormal basis $(\hat{e}_1,\hat{e}_2,\hat{e}_3)$ with
\begin{equation}\label{newBasisBis}
\hat{e}_1=\cfrac{\sqrt{2}}{2}\,e_1+\cfrac{\sqrt{2}}{2}\,e_2;\qquad\qquad\hat{e}_2=-\cfrac{\sqrt{2}}{2}\,e_1+\cfrac{\sqrt{2}}{2}\,e_2;\qquad\qquad \hat{e}_3=e_3.
\end{equation}
Remark that $\Upsilon^\kappa$ is included in the plane $(O,\hat{e}_1,\hat{e}_3)$. Let $(\hat{x}_1,\hat{x}_2,\hat{x}_3)$ denote the coordinates in the basis (\ref{newBasisBis}). We have
\begin{equation}\label{Terme0Bis}
\begin{array}{rcl}
I_{\Upsilon^{\kappa}}^\eps &=& \dsp\int_{\Upsilon^\kappa}e^{-  \eps \sqrt{2}(\hat{x}_1-\hat{x}_2)/2} v\,\cfrac{\partial }{\partial\hat{x}_2} \big( e^{-\eps \sqrt{2}(\hat{x}_1-\hat{x}_2)/2} v \big)\,d\hat{x}_1d\hat{x}_3 \\[10pt]
& = & \dsp\int_{\Upsilon^\kappa}\cfrac{\eps\sqrt{2}}{2}\,e^{-  \eps \sqrt{2}(\hat{x}_1-\hat{x}_2)} v^2\,d\hat{x}_1d\hat{x}_3+\dsp\int_{\Upsilon^\kappa}\cfrac{1}{2}\,e^{-  \eps \sqrt{2}(\hat{x}_1-\hat{x}_2)}\,\cfrac{\partial(v^2) }{\partial\hat{x}_2}\,d\hat{x}_1d\hat{x}_3.
\end{array}
\end{equation}
Exploiting the exponential decay of $v (\xi)$ as $\xi_1 \to + \infty$, (see Section \ref{sec2}), one finds that the first integral of the right hand side above is $O(\eps)$. For the second one, using that $v$ is independent of $x_1$, we can write
\[
0=\frac{\partial (v^2)}{
\partial x_1}=\frac{\sqrt{2}}{2}\frac{\partial(v^2)}{
\partial \hat{x}_1}-\frac{\sqrt{2}}{2}\frac{\partial (v^2)}{
\partial \hat{x}_2}.
\]
Remarking also that $\hat{x}_2=0$ on $\Upsilon^\kappa$, this gives 
\begin{equation}\label{Terme3Bis}
\begin{array}{ll}
&\dsp\int_{\Upsilon^\kappa}\cfrac{1}{2}\,e^{-  \eps \sqrt{2}(\hat{x}_1-\hat{x}_2)}\,\cfrac{\partial(v^2) }{\partial\hat{x}_2}\,d\hat{x}_1d\hat{x}_3=\dsp\int_{\Upsilon^\kappa}\cfrac{1}{2}\,e^{-  \eps \sqrt{2}\,\hat{x}_1}\,\cfrac{\partial(v^2) }{\partial\hat{x}_1}\,d\hat{x}_1d\hat{x}_3 \\[10pt]
=&\dsp\int_{\Upsilon^\kappa}\cfrac{\eps\sqrt{2}}{2}\,e^{-  \eps \sqrt{2}\,\hat{x}_1}\,v^2\,d\hat{x}_1d\hat{x}_3+\int_{L^\kappa} \cfrac{1}{2}\,e^{-  \eps \sqrt{2}\,\hat{x}_1}\,v^2\,\nu\cdot\hat{e}_1\,d\ell
\end{array}
\end{equation}
where $L^\kappa := \{ x\in\bbR^3 \,|\, x_j = \kappa_j x_3, \ j= 1,2, \ x_3 \in (0,1) \}$ and where $\nu$ stands for the outward unit normal vector to $\Upsilon^\kappa$ (in the plane $(O,\hat{e}_1,\hat{e}_3)$). Using that $\kappa_1=\kappa_2>0$, we find  $\nu\cdot\hat{e}_1<0$ on $L^\kappa$. Since there holds $v\not\equiv0$ on $L^\kappa$, gathering (\ref{Terme0Bis}) and (\ref{Terme3Bis}), we deduce that we have $I_{\Upsilon^{\kappa}}^\eps<0$ for $\eps$ small enough. From (\ref{CalculTermeG}), we deduce
\[
\Vert \nabla_x \psi^\eps ;\mL^2(\Omega_\wedge^\kappa) \Vert^2 - \mu_1^{\alpha_1}
\Vert \psi^\eps ;\mL^2(\Omega_\wedge^\kappa) \Vert^2<0
\]
for $\eps$ small enough. Then by symmetry, we obtain
\[
\Vert \nabla_x \psi^\eps ;\mL^2(\Omega^\kappa) \Vert^2 - \mu_1^{\alpha_1}
\Vert \psi^\eps ;\mL^2(\Omega^\kappa) \Vert^2=2\Vert \nabla_x \psi^\eps ;\mL^2(\Omega_\wedge^\kappa) \Vert^2 - 2\mu_1^{\alpha_1}
\Vert \psi^\eps ;\mL^2(\Omega_\wedge^\kappa) \Vert^2<0.
\]
We conclude that the inequality \ef{11} is satisfied by the function \ef{14} which proves the following statement. 
\BET
\label{T22}
For $\kappa_1 = \kappa_2 > 0$, the discrete spectrum $\sigma_{\rm d}(A^\kappa)$ of the operator $A^\kappa$ is not empty. 
\ENT 

\noindent Since the eigenvalues of the discrete spectrum are stable with respect to small perturbations of the operator, Theorem \ref{T22} and diffeomorphisms similar to \ef{0x} imply that $\sigma_{\rm d}(A^\kappa)$ is not empty for $\kappa_2$ in a neighbourhood of $\kappa_1$. With Proposition \ref{P23}, this allows us to introduce $h(\kappa_1)\in(0,\kappa_1)$ as the infimum of the numbers $\delta$ such that $\sigma_{\rm d}(A^\kappa)$ is non-empty for all $\kappa_2\in(\delta,\kappa_1]$.\\
\newline
On the other hand, we have the following monotonicity result:
\BEL \label{P28}
Consider some $\kappa=(\kappa_1,\kappa_2)$ with $\kappa_1>0$ and $\kappa_2\in(0,\kappa_1]$ such that $A^\kappa$ has a non-empty discrete spectrum. Let $\lambda_1^\kappa$ denote the first (smallest) eigenvalue of $\sigma_{d}(A^\kappa)$. For $\eps>0$ small, set $\kappa^\eps :=
(\kappa_1, \kappa_2 + \eps)$ and denote by $\lambda_1^{\kappa^\eps}$ the first eigenvalue of $\sigma_{d}(A^{\kappa^\eps})$. Then, we have 
\bea\label{28}
\lambda_1^{\kappa^\eps}<\lambda_1^\kappa . 
\eea
\ENL
\begin{proof}
Using again the minimum principle, we can write
\begin{equation}\label{defQuotient}
\lambda_1^{\kappa^\eps} = \min_{\psi^\eps \in \mH_0^1( \Omega^{\kappa^\eps}; \Sigma^{\kappa^\eps}
) \setminus \{ 0 \}}
\frac{\big\Vert \nabla_x \psi^\eps ;\mL^2(\Omega^{\kappa^\eps}) \big\Vert^2 
}{
\big\Vert \psi^\eps;\mL^2(\Omega^{\kappa^\eps}) \big\Vert^2}\,.
\end{equation}
Now define the function $\psi^\eps$ such that
\beas
\psi^\eps(x) = u^\kappa \Big( x_1,\frac{\kappa_2 x_2}{\kappa_2 + \eps},x_3 \Big),
\eeas
where $u^\kappa \in\mH_0^1(\Omega^\kappa ; \Sigma^\kappa)$ is an eigenfunction associated with the first eigenvalue of $\sigma_{d}(A^{\kappa})$. Clearly $\psi^\eps$ is a non zero element of $\mH_0^1( \Omega^{\kappa^\eps})$. Besides, we find 
\[
\big\Vert \psi^\eps;\mL^2(\Omega^{\kappa^\eps}) \big\Vert^2=\cfrac{\kappa_2+\eps}{\kappa_2}\,\big\Vert u^\kappa;\mL^2(\Omega^{\kappa}) \big\Vert^2
\]
\[
\mbox{ and }\qquad\big\Vert \nabla_x\psi^\eps;\mL^2(\Omega^{\kappa^\eps}) \big\Vert^2=\frac{\kappa_2}{\kappa_2 + \eps} 
\,\Big\Vert \frac{\partial u^\kappa}{ \partial x_2} ;\mL^2(\Omega^\kappa ) \Big\Vert^2 
+ \cfrac{\kappa_2+\eps}{\kappa_2}\sum_{j=1,3}  \Big\Vert \frac{\partial u^\kappa}{ \partial x_j} ;\mL^2(\Omega^\kappa ) \Big\Vert^2.
\]
According to (\ref{defQuotient}), these identities imply
\beas
\lambda_1^{\kappa^\eps} &\leq& \Vert u^\kappa ;\mL^2(\Omega^\kappa ) \Vert^{-2}
\bigg( 
\frac{\kappa_2^2}{(\kappa_2 + \eps)^2} 
\,\Big\Vert \frac{\partial u^\kappa}{ \partial x_2} ;\mL^2(\Omega^\kappa ) \Big\Vert^2 
+ \sum_{j=1,3}  \Big\Vert \frac{\partial u^\kappa}{ \partial x_j} ;\mL^2(\Omega^\kappa ) \Big\Vert^2 \bigg)
\rowleq
\frac{\Vert \nabla_x u^\kappa;\mL^2(\Omega^\kappa) \Vert^2 }{\Vert u^\kappa;\mL^2(\Omega^\kappa) \Vert^2 } = \lambda_1^\kappa. 
\eeas
The strict inequality in (\ref{28}) follows from the fact that the derivative $\partial u^\kappa
/ \partial x_2$ cannot be null in the whole domain $\Omega^\kappa$. This
completes the proof of the lemma.
\end{proof}
\noindent According to relation \ef{28}, the function $\kappa_2\mapsto \lambda_1^\kappa$ is strictly decreasing on $(h(\kappa_1),\kappa_1)$. Besides, Lemma \ref{P28} ensures that $\sigma_{\rm d}(A^\kappa)$ cannot be non-empty for some $\tilde{h}(\kappa_1)\in(0,h(\kappa_1))$ otherwise  $\sigma_{\rm d}(A^\kappa)$ would be non-empty for all $\kappa_2\in(\tilde{h}(\kappa_1),\kappa_1]$ which contradicts the definition of $h(\kappa_1)$. This completes the proof of the item $\textit{2)}$ of Theorem \ref{T24}. 

\begin{remark}
For $\kappa_2\in[- \kappa_1,0)$, we have seen in Section \ref{sec3} that $\sigma_{\rm d}(A^\kappa)$ is non-empty. Let $\lambda_1^\kappa$ denote the smallest eigenvalue of $\sigma_{\rm d}(A^\kappa)$. By adapting the proof of Lemma \ref{P28}, one establishes that the map $\kappa_2\mapsto \lambda_1^\kappa$ is strictly increasing on $[- \kappa_1,0)$. Together with Lemma \ref{P28}, this shows the item $\textit{3)}$ of Theorem \ref{T24}.
\end{remark}

\section{Finiteness of the discrete spectrum}

Finally, we establish the item \textit{4)} of Theorem \ref{T24} and so assume that $\kappa_2\in(-\kappa_1,\kappa_1)$. Set again $\alpha_1=\arctan\kappa_1$, $\alpha_2=\arctan\kappa_2$. Since $|\alpha_2| <\alpha_1$, similarly to (\ref{p5}), we have 
\begin{equation}\label{FirstEstim}
\lambda_\dagger^\kappa=\mu_1^{\alpha_1} < \mu_1^{|\alpha_2|}=\mu_1^{\alpha_2}.
\end{equation}
We remind the reader that $\mu_1^{\alpha_j}$ stands for the smallest eigenvalue of the 2D problem (\ref{pb2D}) set in the pointed strip $\Pi^{\alpha_j}$ appearing in (\ref{PointedStrip}). Observe that $\Pi^{\alpha_2}$ can be obtained from $\Pi^{-\alpha_2}$ by a symmetry with respect to the line $\xi_2=1/2$ and a translation, which ensures that $\mu_1^{-\alpha_2}=\mu_1^{\alpha_2}$ and so $\mu_1^{|\alpha_2|}=\mu_1^{\alpha_2}$. 

\begin{figure}[h!]
\centering
\begin{tikzpicture}
\filldraw[fill=gray!20,draw=none] (0,0) -- ++(5,0) -- ++(0,2) -- ++(-4,0)--cycle;
\draw (5,0) -- ++(-5,0) -- ++(1,2) -- ++(4,0);
\draw (5,0)-- ++(0,2);
\draw[->] (0,0) -- (1,0) node[anchor=north]{$\xi_1$};
\draw[->] (0,0) -- (0,1) node[anchor=east]{$\xi_2$};
\draw[<-] (0.55,1) --++ (0.6,0) node[anchor=west]{$\xi_1=\xi_2\,\tan\alpha_2$};
\draw[<-] (0,0) ++(90:.6) arc (90:65:.6) node[anchor=south] {\tiny $\alpha_2$}; 
\node at (5,-0.3) [anchor=center] {$R$};
\begin{scope}[xshift=-1.8cm]
\draw[->] (0,0) -- (1,0) node[anchor=north]{$e_2$};
\draw[->] (0,0) -- (0,1) node[anchor=east]{$e_3$};
\draw[-] (0,0) -- (0,0) node[anchor=north east]{$e_1$};
\draw (0,0) circle (0.8ex);
\draw[fill=black] (0,0) circle (.3ex);
\end{scope}
\end{tikzpicture}\qquad
\begin{tikzpicture}
\filldraw[fill=gray!20,draw=none] (0,0) -- ++(6,0) -- ++(0,4) -- ++(-6,0)--cycle;
\draw (0,0)--(6,0);
\draw (0,0)--(0,4);
\draw (4,0)--(4,4);
\draw (0,1)--(4,1);
\node at (2,0.5) [anchor=center] {$\Om^\kappa_{\mrm{III}}$};
\node at (5,2) [anchor=center] {$\Om^\kappa_{\mrm{II}}$};
\node at (2,2.5) [anchor=center] {$\Om^\kappa_{\mrm{I}}$};
\node at (4,-0.3) [anchor=center] {$R$};
\begin{scope}[xshift=0cm]
\draw[->] (0,0) -- (1,0) node[anchor=north]{$e_2$};
\draw[->] (0,0) -- (0,1) node[anchor=east]{$e_1$};
\draw[-] (0,0) -- (0,0) node[anchor=north east]{$e_3$};
\draw (0,0) circle (0.8ex);
\draw (-0.09,-0.09) -- (0.09,0.09);
\draw (0.09,-0.09) -- (-0.09,0.09);
\end{scope}
\end{tikzpicture}
\caption{Left: truncated pointed strip $\Pi^{\alpha_2}(R)$. Right: bottom view of the decomposition of $\Om^\kappa$.}\label{FigureBottomView}
\end{figure}
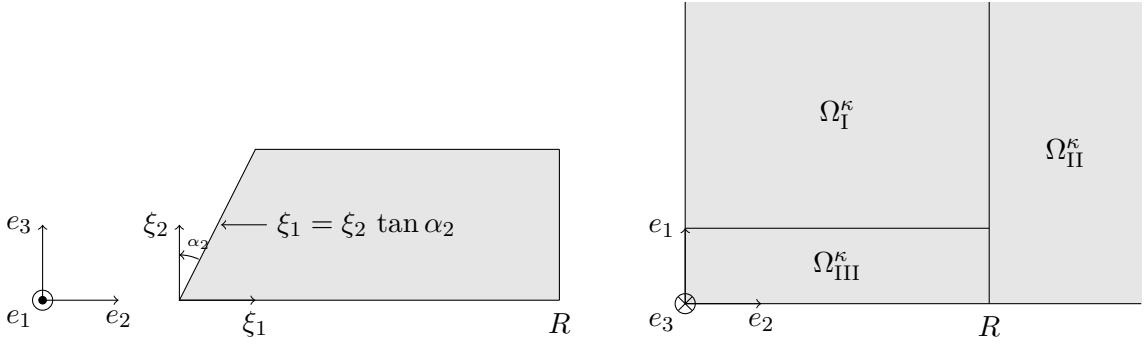

\noindent For $R>0$, define the truncated pointed strip 
\[
\Pi^{\alpha_2}(R):=\{(\xi_1,\xi_2)\in\Pi^{\alpha_2}\,|\,\xi_1<R\}
\]
(see Figure \ref{FigureBottomView} left) and consider the problem
\begin{equation}\label{pb2Dbis}
\begin{array}{|rcll}
- \Delta_\xi v  &=& \mu v & \quad\mbox{in }\Pi^{\alpha_2}(R) \\[2pt]
v &=& 0 & \quad\mbox{on }\{\xi\in\partial\Pi^{\alpha_2}(R)\,|\,\xi_2=0\mbox{ or }\xi_2=1\}\\[2pt]
\partial_{\nu} v&=& 0 & \quad\mbox{on }\{\xi\in\partial\Pi^{\alpha_2}(R)\,|\,\xi_2\ne0\mbox{ and }\xi_2\ne1\}.
\end{array}
\end{equation}

\noindent Denote by $\mu_1^{\alpha_2}(R)$ the smallest eigenvalue of (\ref{pb2Dbis}).  
Since $\mu_1^{\alpha_2}(R)$ converges to $\mu_1^{\alpha_2}$ as $R\to+\infty$, according to (\ref{FirstEstim}), we can fix $R>|\kappa_2|$ such that 
\begin{equation}\label{FirstRelation}
\mu_1^{\alpha_2}(R)>\lambda_\dagger^\kappa.
\end{equation}
Then let us divide $\Om^\kappa$ into the three domains
\[
\begin{array}{c}
\Om^\kappa_{\mrm{I}}:=\{x\in\Om^\kappa\,|\,x_1>\kappa_1\mbox{ and }x_2<R\},\qquad\qquad\Om^\kappa_{\mrm{II}}:=\{x\in\Om^\kappa\,|\,x_2>R\},\\[6pt]
\Om^\kappa_{\mrm{III}}:=\{x\in\Om^\kappa\,|\,x_1<\kappa_1\mbox{ and }x_2<R\}
\end{array}
\]
(see the representation of Figure \ref{FigureBottomView} right). Using (\ref{FirstRelation}), we obtain
\begin{equation}\label{Identi1}
\Vert \nabla_x \psi ;\mL^2(\Om^\kappa_{\mrm{I}}) \Vert^2 \geq \lambda_\dagger^\kappa\,\Vert  \psi ;\mL^2(\Om^\kappa_{\mrm{I}}) \Vert^2 \qquad\forall \psi \in \mH_0^1(\Omega^\kappa ; \Sigma^\kappa).
\end{equation}
On the other hand, from (\ref{16}), we get 
\begin{equation}\label{Identi2}
\Vert \nabla_x \psi ;\mL^2(\Om^\kappa_{\mrm{II}}) \Vert^2 \geq \lambda_\dagger^\kappa\,\Vert  \psi ;\mL^2(\Om^\kappa_{\mrm{II}}) \Vert^2 \qquad\forall \psi \in \mH_0^1(\Omega^\kappa ; \Sigma^\kappa).
\end{equation}
Besides, since $\Om^\kappa_{\mrm{III}}$ is bounded, the max-min principle (\cite[Thm. 10.2.2]{BiSo87}) guarantees that there is $n\in\bbN:=\{0,1,2,\dots\}$ such that
\begin{equation}\label{Identi3}
\lambda_\dagger^\kappa\le\max_{E\subset\mathscr{E}_n}\inf_{\psi\in E\setminus\{0\}}\cfrac{\dsp\int_{\Om^\kappa_{\mrm{III}}}|\nabla \psi|^2\,dx}{\dsp\int_{\Om^\kappa_{\mrm{III}}} \psi^2\,dx}\,,
\end{equation}
where $\mathscr{E}_n$ denotes the set of subspaces of $\mH^1_{0}(\Om^\kappa_{\mrm{III}};\Sigma_0\cap\partial\Om^\kappa):=\{\varphi\in\mH^1(\Om^\kappa_{\mrm{III}})\,|\,\varphi=0\mbox{ on }\Sigma_0\cap\partial\Om^\kappa\}$ of codimension $n$. Gathering (\ref{Identi1})--(\ref{Identi3}), we deduce that there holds 
\[
\lambda_\dagger^\kappa\le\max_{E\subset\tilde{\mathscr{E}}_n}\inf_{\psi\in E\setminus\{0\}}\cfrac{\dsp\int_{\Om^\kappa}|\nabla \psi|^2\,dx}{\dsp\int_{\Om^\kappa} \psi^2\,dx}\,,
\]
where this times $\tilde{\mathscr{E}}_n$ stands for the set of subspaces of $\mH^1_{0}(\Om^\kappa;\Sigma_0)$ of codimension $n$. From the max-min principle, this proves that $\sigma_{\rm d}(A^\kappa)$ contains at most $n$ (depending on $\kappa$) eigenvalues.

\begin{remark}
Our simple proof above does not work to show that $\sigma_{\rm d}(A^\kappa)$ is discrete when $\kappa_2=\pm\kappa_1$. However we do not expect particular phenomenon and think the result also holds in this case. It is proved in \cite[Thm.\,1.2]{DaLO18} when $\kappa_2=\kappa_1=1$.
\end{remark}

\section{Numerics and discussion}\label{SectionNum}

In this section, we illustrate some of the results above. In Figure \ref{BoundEssentialSpectrum}, we represent an approximation of the first eigenvalue of the 2D problem (\ref{pb2D}) set in the pointed strip $\Pi^{\alpha_1}$ with respect to $\alpha_1\in(0,9\pi/20)$. We use a rather crude method which consists in truncating the domain at $\xi_2=12$ (see the picture of Figure \ref{FigureBottomView} left) and imposing homogeneous Dirichlet boundary condition on the artificial boundary. Then we compute the spectrum in this bounded geometry by using a classical P2 finite element method. To proceed, we use the library \texttt{Freefem++} \cite{Hech12} and display the results with \texttt{Matlab}\footnote{\texttt{Matlab}, \url{http://www.mathworks.com/}.} and \texttt{Paraview}\footnote{\texttt{Paraview}, \url{http://www.paraview.org/}.}. The values we get are coherent with the ones recalled in (\ref{defAlpha1}). For more details concerning the numerical analysis of this problem, we refer the reader to \cite{DaLR12}. In this work, one can also find an interesting study concerning the case $\alpha\to(\pi/2)^-$.\\
\newline
In Figure \ref{Figk1m1}--\ref{Figk11}, we fix $\kappa_1=1$ (equivalently $\alpha_1=\pi/4$) and compute the first eigenvalue of $\sigma_{\rm d}(A^\kappa)$ for $\kappa_2\in\{-1,-0.1,1\}$. For $\kappa_1=1$, the bound of the essential spectrum of $A^\kappa$ is $\lambda^\kappa_\dagger\approx0.929\pi^2$ (see Figure  \ref{BoundEssentialSpectrum} as well as \cite{DaLO18}). For each of the three $\kappa_2$, in agreement with Theorem \ref{T24}, we find an eigenvalue below the essential spectrum.   Actually, in each situation our numerical experiments seem to indicate that there is only one eigenvalue in the discrete spectrum, which is a result that we have not proved. Interestingly, for $\kappa_2=-0.1$ (Figure \ref{Figk1m01}), the eigenfunction is not particularly localized at the intersection of the obliques sides. This is related to the so-called Agmon estimates which guarantee that the decay rate coincides with the square root of the difference between the lower bound of the essential spectrum $\lambda^\kappa_\dagger$ and the eigenvalue $\lambda^\kappa_1$ (see \cite{Agmon14,DaRa12}). For $\kappa_2=-0.1$ the quantity $\lambda^\kappa_\dagger-\lambda^\kappa_1$ is rather small. We emphasize that here we simply compute the spectrum of the Laplace operator with mixed boundary conditions in the bounded domain $\{x\in\Om^\kappa\,|x_1<6\mbox{ and }x_2<6\}$. At $x_1=6$ and $x_2=6$, we impose homogeneous Neumann boundary condition. Admittedly, this is a very naive approximation, especially in the case of Figure \ref{Figk1m01}. Our approximation lacks precision in that situation, this is the reason why we do not give the value of the corresponding eigenvalue.\\
\newline
In Figure \ref{Figk3m3}, we represent eigenfunctions associated with two different eigenvalues of $\sigma_{\rm d}(A^\kappa)$ for $\kappa=(3,-3)$. For the 2D problem (\ref{pb2D}) in the pointed strip, the cardinal of the discrete spectrum can be made as large as desired by considering sufficiently sharp angles. We imagine that a similar phenomenon occurs in our geometry $\Om^\kappa$. However to prove such a result is an open problem. At least the numerics of  Figure \ref{Figk3m3} suggest that we can have more than one eigenvalue in $\sigma_{\rm d}(A^\kappa)$.\\
\newline
As mentioned in the introduction, the fact that the discrete spectrum of $A^\kappa$ for $\kappa=(1,1)$ is not empty ensures that the Dirichlet Laplacian in the so-called Fichera layer $\mathscr{F}$ of Figure \ref{Figure3D} right admits an eigenvalue below the essential spectrum. This can be proved by playing with symmetries and reconstructing $\mathscr{F}$ from three versions of $\Om^{(1,1)}$. Now, gluing six domains $\Om^{(1,-1)}$, we can create the cubical structure pictured in Figure \ref{FigCubicalStructure} (note that its boundary is not Lipschitz). Then, from Theorem \ref{T24} which guarantees that $\sigma_{\rm d}(A^\kappa)$ contains at least one eigenvalue, we deduce that the Dirichlet Laplacian in this geometry has at least one eigenvalue.

\begin{figure}[!ht]
\hspace{0.5cm}
\includegraphics[width=7cm]{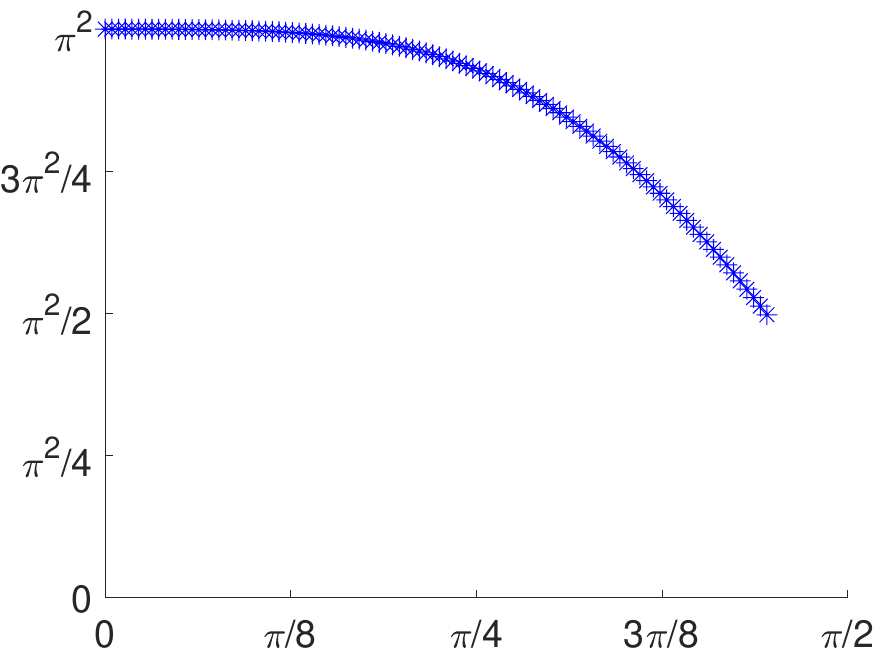}
\raisebox{3.6cm}{\hspace{-1cm}
\begin{tabular}{l}
\includegraphics[width=8cm]{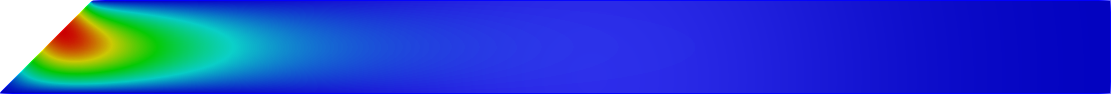}\\[35pt]
\includegraphics[width=8cm]{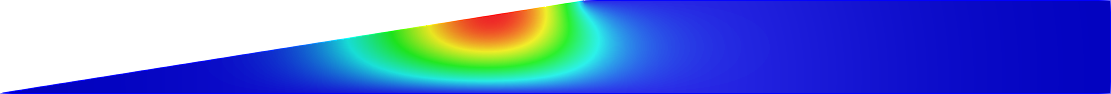}
\end{tabular}
}
\raisebox{2.8cm}{\hspace{-8.8cm}
\begin{tikzpicture}[scale=1]
\draw[black,->,very thick] (-0.5,0)--(-2,0);
\end{tikzpicture}}
\raisebox{4.7cm}{\hspace{-4cm}
\begin{tikzpicture}[scale=1]
\draw[black,->,very thick] (-0.5,0)--(-2,0);
\end{tikzpicture}}
\caption{Curve $\alpha_1\mapsto \mu^{\alpha_1}_1$ for $\alpha_1\in(0,9\pi/20)$. According to Theorem \ref{T11}, this gives the bound $\lambda^\kappa_\dagger$ of the essential spectrum of $A^\kappa$. The two pictures correspond to eigenfunctions associated with $\mu^{\alpha_1}_1$ for $\alpha_1=\pi/4$ and $\alpha_1=9\pi/20$.}
\label{BoundEssentialSpectrum}
\end{figure}

\begin{figure}[!ht]
\centering
\includegraphics[width=7.5cm]{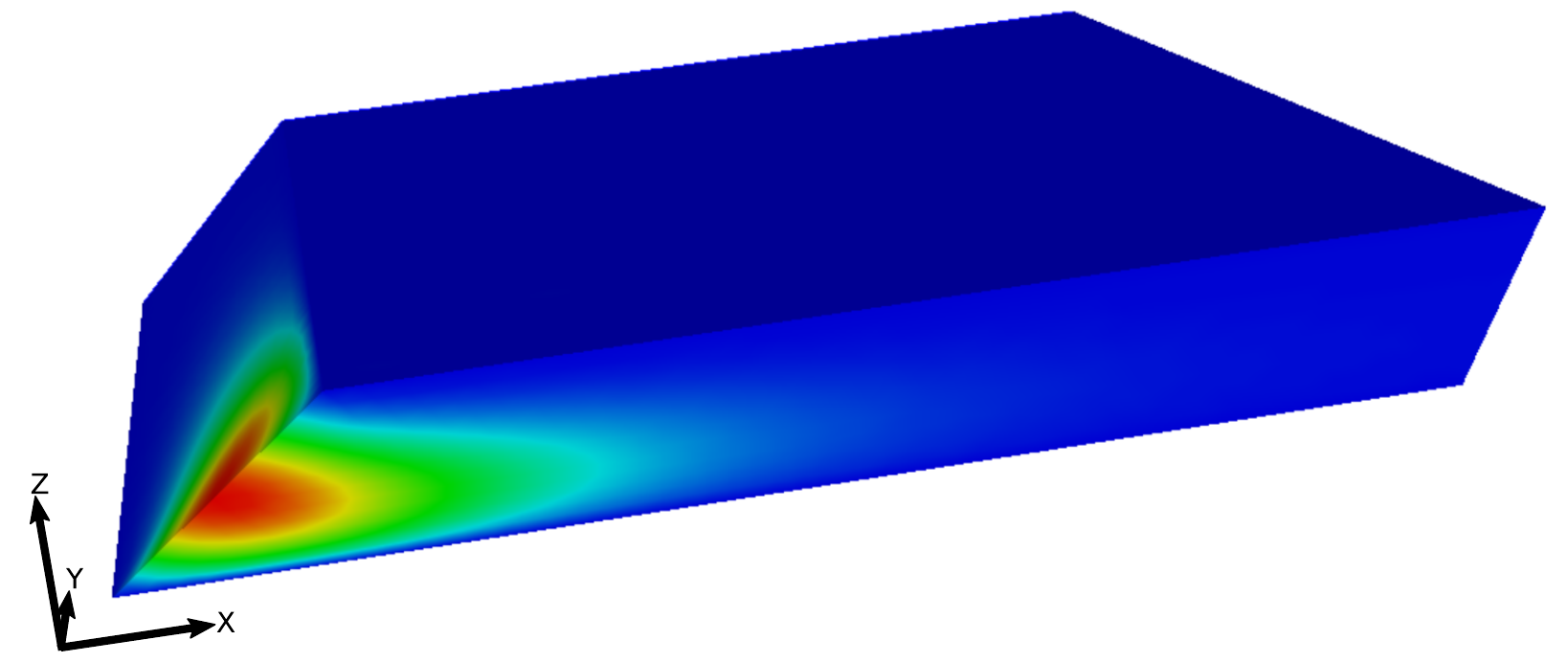}
\includegraphics[width=8.5cm]{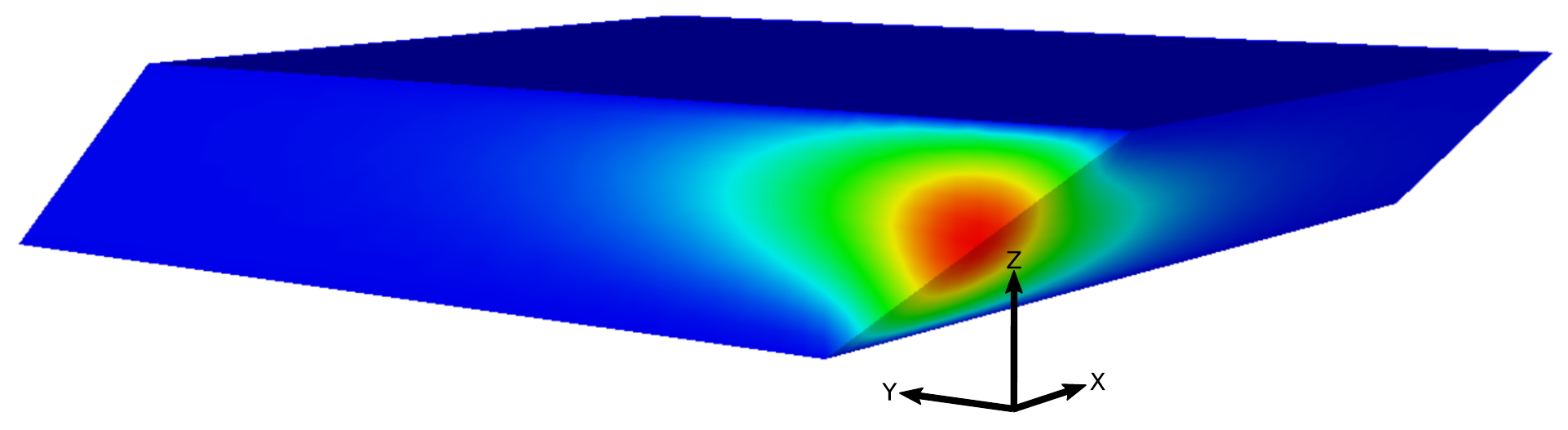}
\caption{Two views of an eigenfunction associated with the first eigenvalue of $\sigma_{\rm d}(A^\kappa)$ for $\kappa=(1,-1)$. We find $\lambda^\kappa_1\approx0.81\pi^2$.}
\label{Figk1m1}
\end{figure}

\begin{figure}[!ht]
\centering
\includegraphics[width=7cm]{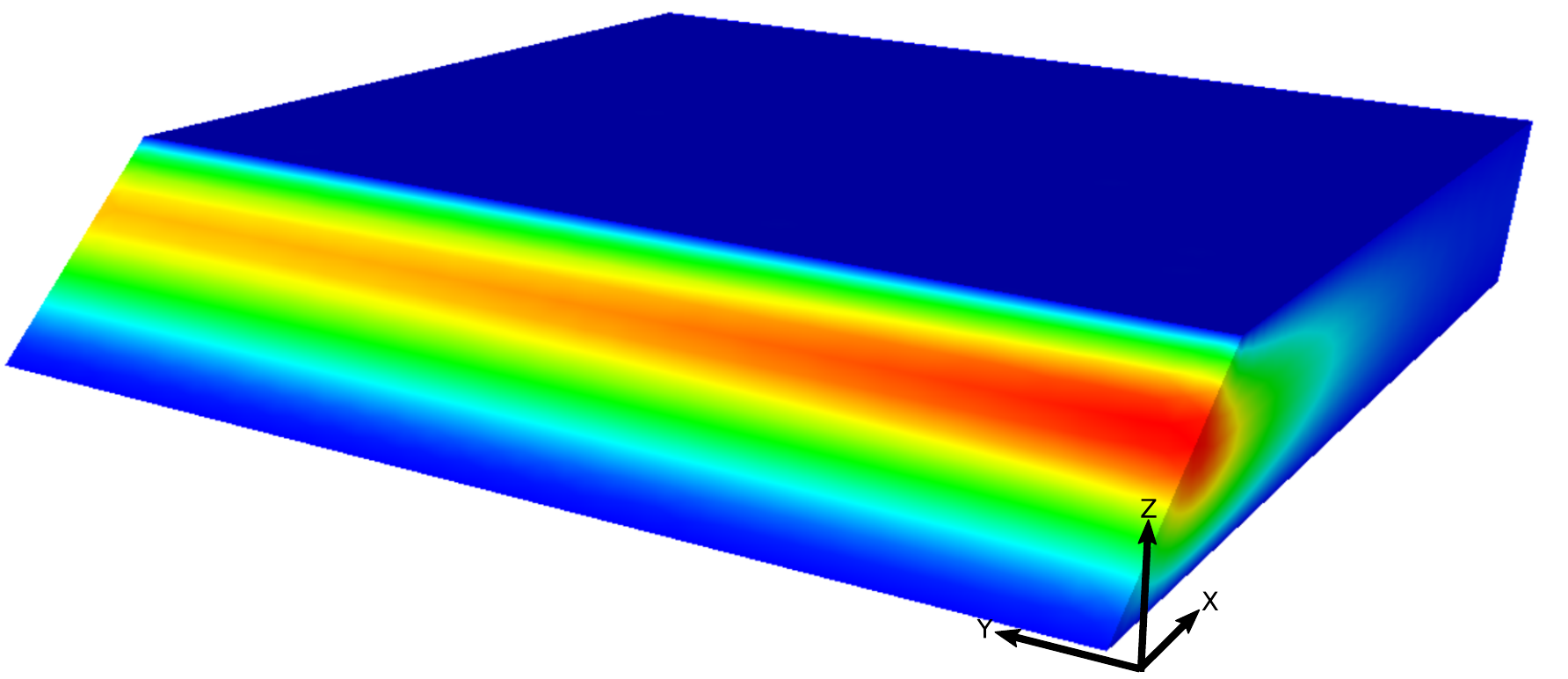}
\caption{Eigenfunction associated with the first eigenvalue of $\sigma_{\rm d}(A^\kappa)$ for $\kappa=(1,-0.1)$.}
\label{Figk1m01}
\end{figure}

\begin{figure}[!ht]
\centering
\includegraphics[width=7cm]{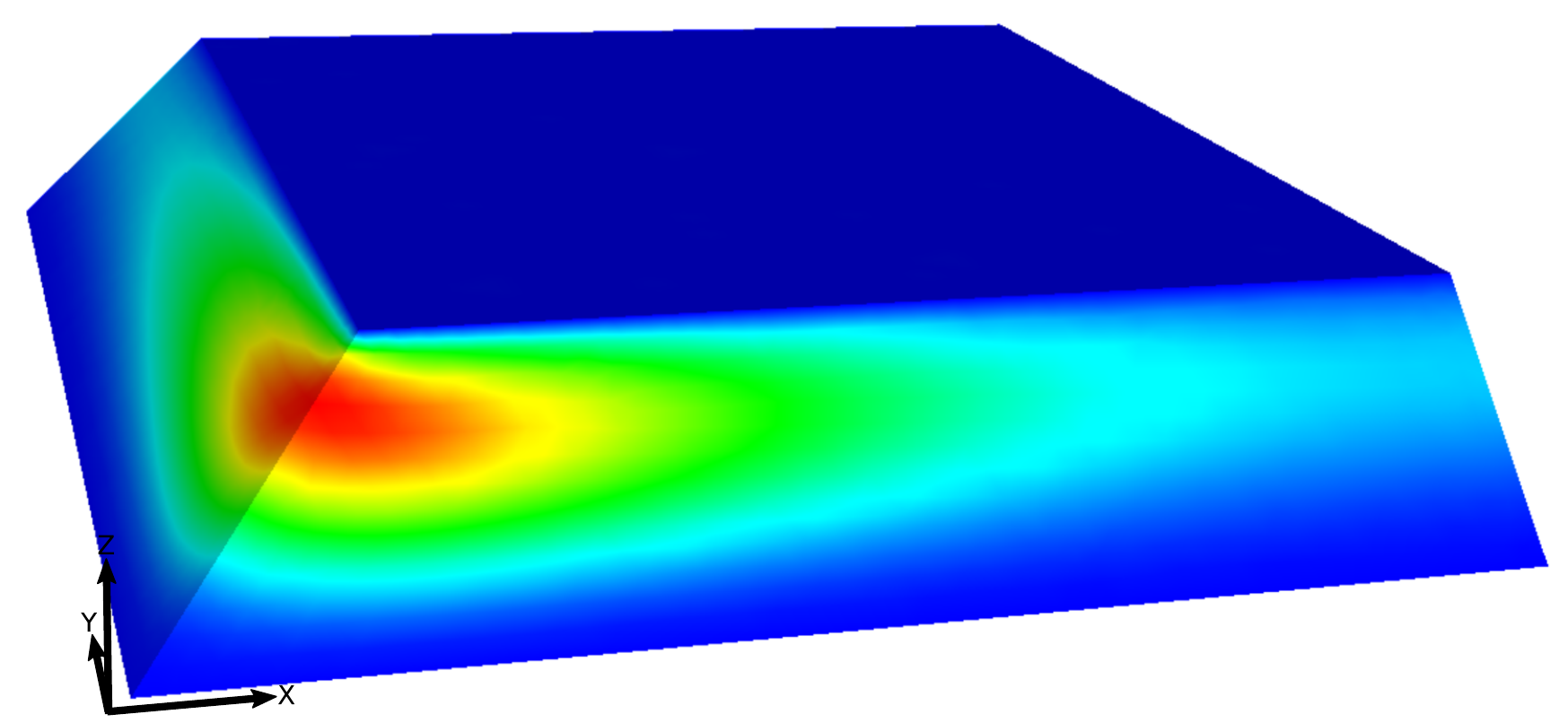}
\caption{Eigenfunction associated with the first eigenvalue of $\sigma_{\rm d}(A^\kappa)$ for $\kappa=(1,1)$. We find $\lambda^\kappa_1\approx0.90\pi^2$.}
\label{Figk11}
\end{figure}

\begin{figure}[!ht]
\centering
\includegraphics[width=7cm]{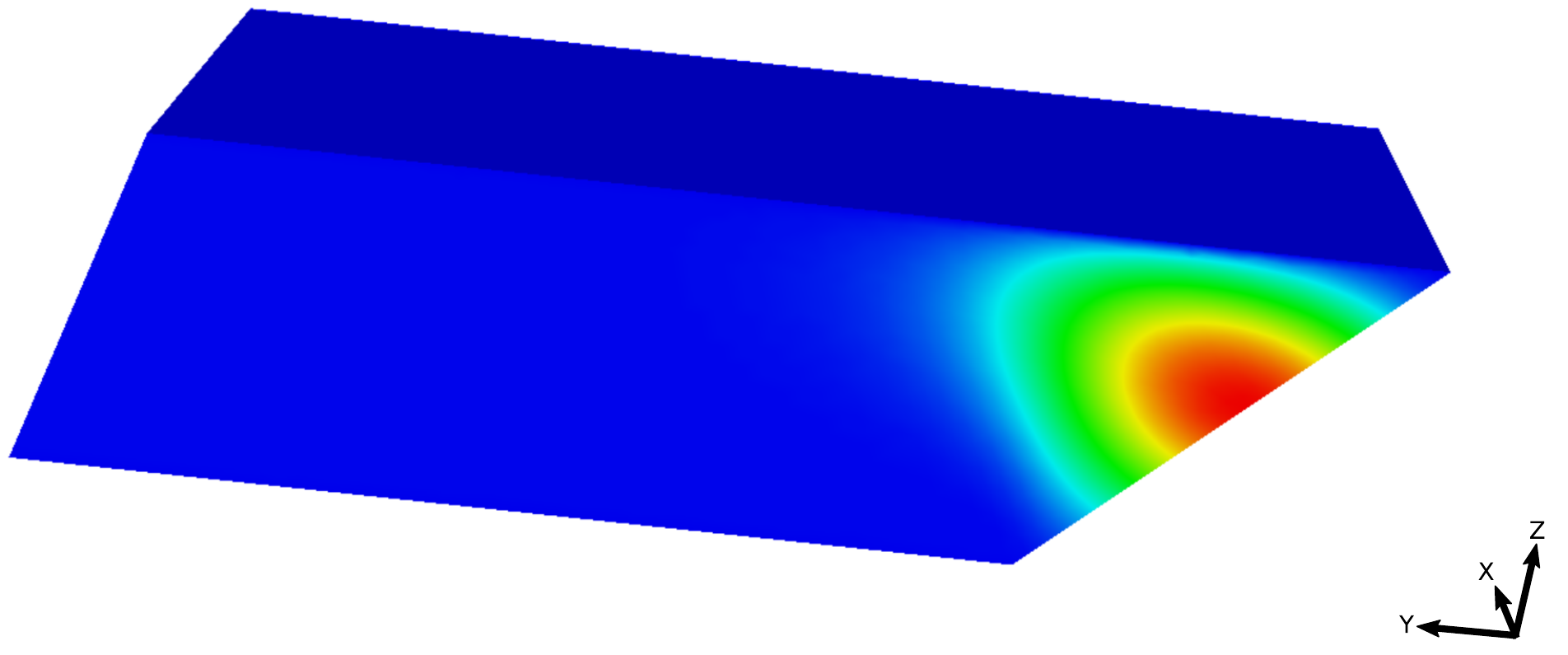}\qquad
\includegraphics[width=8cm]{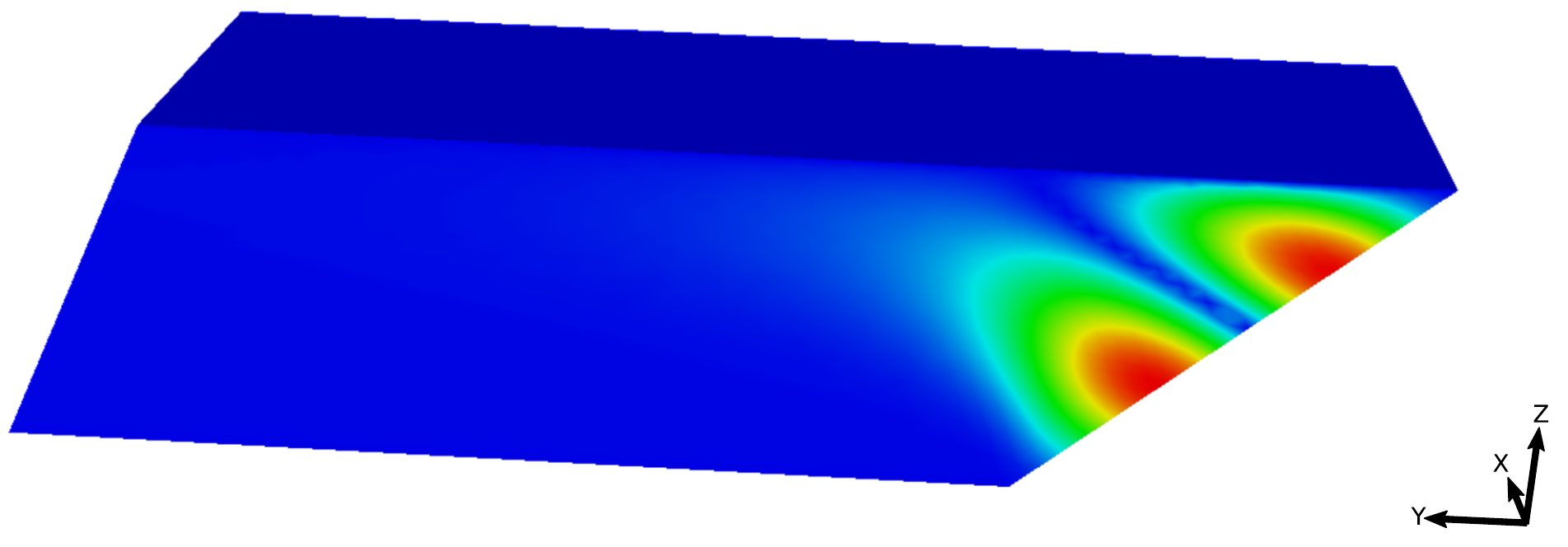}
\caption{Eigenfunctions associated with two different eigenvalues of $\sigma_{\rm d}(A^\kappa)$ for $\kappa=(3,-3)$.}
\label{Figk3m3}
\end{figure}

\begin{figure}[!ht]
\centering
\includegraphics[width=11cm]{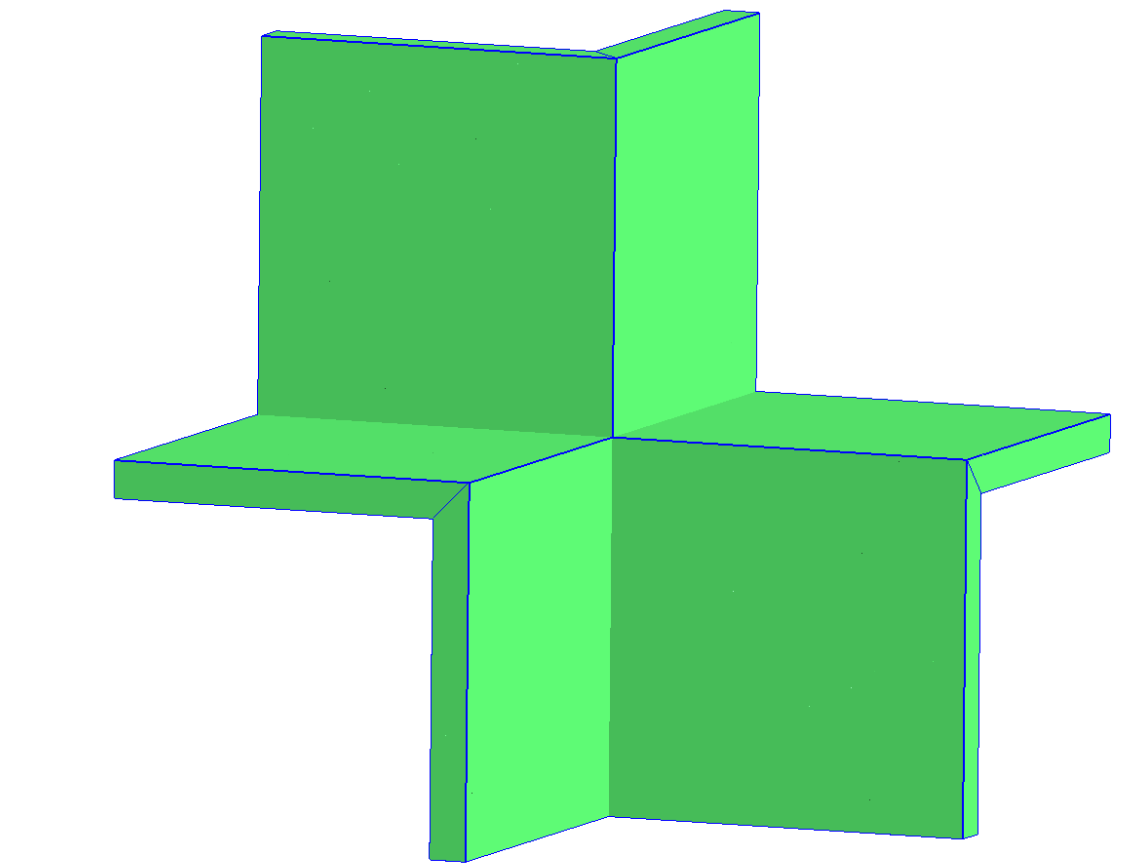}
\caption{Cubical structure obtained by gluing six domains $\Om^{\kappa}$ with $\kappa=(1,-1)$.}
\label{FigCubicalStructure}
\end{figure}

\newpage
\clearpage

\section*{Appendix}

Here we show that the Dirichlet Laplacian in $\Om^\kappa$ has no isolated eigenvalue nor eigenvalues embedded in the essential spectrum.

\begin{proposition}\label{DirichletLaplacianApp}
Fix $\kappa=(\kappa_1,\kappa_2)\in\bbR^2$. Assume that $u\in\mH^1(\Om^\kappa)$ satisfies
\begin{equation}\label{MainProblemD}
\begin{array}{|rcll}
- \Delta_x u &=& \lambda u & \quad\mbox{in }\Omega^\kappa\\[3pt]
u &=& 0 & \quad\mbox{on }\partial\Omega^\kappa
\end{array}
\end{equation}
for some $\lambda\in\bbC$. Then there holds $u\equiv0$ in $\Om^\kappa$.
\end{proposition}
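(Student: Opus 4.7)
The plan is to derive a Rellich-type identity twice, with the multipliers $\partial_{x_1}u$ and $\partial_{x_2}u$, which will force $\partial_\nu u$ to vanish on both oblique faces $\Gamma_1^\kappa,\Gamma_2^\kappa$, and then to conclude by unique continuation.

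First I would reduce to $\lambda\in\bbR$ with $u$ real by testing the equation against $\bar u$ and integrating by parts, which yields $\Vert\nabla u;\mL^2(\Om^\kappa)\Vert^2=\lambda\,\Vert u;\mL^2(\Om^\kappa)\Vert^2$, so $\lambda\in\bbR_{\ge 0}$ whenever $u\not\equiv 0$, after which $\mrm{Re}\,u$ and $\mrm{Im}\,u$ separately solve the real-valued problem. Next, for any constant vector $a\in\bbR^3$, I multiply $-\Delta u=\lambda u$ by $a\cdot\nabla u$ and integrate over $\Om^\kappa$. The right-hand side $\lambda\int u(a\cdot\nabla u)\,dx=\tfrac{\lambda}{2}\int_{\partial\Om^\kappa}(a\cdot\nu)u^2\,d\sigma$ vanishes because $u=0$ on $\partial\Om^\kappa$ and $\mrm{div}\,a=0$; integration by parts on the left, combined with the fact that $u|_{\partial\Om^\kappa}=0$ forces $\nabla u=(\partial_\nu u)\,\nu$ on $\partial\Om^\kappa$, gives the classical Rellich identity
\[
\int_{\partial\Om^\kappa}(a\cdot\nu)(\partial_\nu u)^2\,d\sigma=0.
\]
Choosing $a=e_1$: on $\Sigma^\kappa$ one has $\nu=\pm e_3$, and on $\Gamma_1^\kappa$ the outward normal is $(0,-1,\kappa_2)/\sqrt{1+\kappa_2^2}$, so $\nu_1=0$ on both; on the remaining face $\Gamma_2^\kappa$, however, $\nu_1=-(1+\kappa_1^2)^{-1/2}\ne 0$ everywhere. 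Hence $\partial_\nu u\equiv 0$ on $\Gamma_2^\kappa$, and the symmetric choice $a=e_2$ yields $\partial_\nu u\equiv 0$ on $\Gamma_1^\kappa$. Since $u=0$ and $\partial_\nu u=0$ on the open analytic face $\Gamma_1^\kappa$ (similarly $\Gamma_2^\kappa$), and $-\Delta-\lambda$ has analytic coefficients and is non-characteristic along these surfaces, Holmgren's theorem gives $u\equiv 0$ in a neighborhood of each $\Gamma_j^\kappa$, and Aronszajn's unique continuation principle then propagates this to the connected set $\Om^\kappa$.

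The main obstacle is justifying the Rellich identity in the unbounded $\Om^\kappa$ when $u$ is only known to lie in $\mH^1$. I would introduce a smooth cutoff $\chi_R\in C_c^\infty(\bbR^3)$ equal to $1$ for $|x|<R$ and supported in $\{|x|<2R\}$, perform the computation with the test function $\chi_R\,a\cdot\nabla u$ on $\Om^\kappa\cap\{|x|<2R\}$, and let $R\to\infty$. The interior commutator terms of the form $\int(\nabla\chi_R\cdot\nabla u)(a\cdot\nabla u)\,dx$ are controlled by $\Vert u;\mH^1(\Om^\kappa)\Vert$ and vanish in the limit. The delicate point is convergence of the surface integral on the unbounded face $\Gamma_2^\kappa$ (and symmetrically $\Gamma_1^\kappa$), i.e., showing $\partial_\nu u\in\mL^2(\Gamma_j^\kappa)$. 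This I would obtain by a separation-of-variables analysis in the cylindrical arm where $x_2\to\infty$: the cross-section becomes the pointed strip $\Pi^{\alpha_1}$, whose Dirichlet Laplacian has purely continuous spectrum $[\pi^2,\infty)$; in the associated generalized-eigenfunction expansion of $u$ along the arm, the $\mL^2$ condition on $u$ excludes all propagating modes and leaves only evanescent contributions, giving polynomial decay of $u$ (and of $\partial_\nu u$) at infinity fast enough for the surface integral to converge.
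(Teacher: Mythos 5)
Your core argument is exactly the paper's: the Rellich multiplier $\partial_{x_1}u$ (equivalently, the constant field $a=e_1$ in the Rellich identity) makes the boundary contributions on $\Sigma^\kappa$ and $\Gamma_1^\kappa$ vanish because $\nu_1=0$ there, forces $\partial_\nu u=0$ on $\Gamma_2^\kappa$ where $\nu_1$ is a nonzero constant, and unique continuation from the vanishing Cauchy data on $\Gamma_2^\kappa$ concludes. (Your second pass with $a=e_2$ is redundant: vanishing Cauchy data on one face already suffices.) The gap lies in the justification of the identity. You never establish that $u\in\mH^2(\Om^\kappa)$, which is needed both to use $a\cdot\nabla u$ as a test function and to give $\nabla u$ an $\mL^2$ trace so that $(\partial_\nu u)^2$ is even integrable on the faces. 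Since $\Om^\kappa$ has edges and a corner this is not automatic; the input the paper uses is that $\Om^\kappa$ is convex (an intersection of half-spaces), so the second basic inequality of Ladyzhenskaya yields a \emph{global} bound $\Vert u;\mH^2(\Om^\kappa)\Vert\le C\big(\Vert \Delta u;\mL^2(\Om^\kappa)\Vert+\Vert u;\mL^2(\Om^\kappa)\Vert\big)$.

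Your proposed control of the surface integrals at infinity does not work as stated. The arm $\{x_2>R\}\cap\Om^\kappa$ is a cylinder over the pointed strip $\Pi^{\alpha_1}$, whose Dirichlet Laplacian has, as you yourself note, purely continuous spectrum $[\pi^2,+\infty)$: there is no discrete family of transverse modes, hence no propagating/evanescent dichotomy, and an $\mL^2$ superposition of oscillatory generalized eigenfunctions need not decay pointwise at any rate (the same difficulty arises in the arm $x_1\to+\infty$ and in the quarter-layer region where both variables are large, so no choice of arm avoids it). The repair is simpler than what you attempt: once $u\in\mH^2(\Om^\kappa)$ globally, the trace theorem on this uniformly Lipschitz boundary gives $\partial_\nu u\in\mL^2(\partial\Om^\kappa)$ outright, all boundary integrals in the Rellich identity converge absolutely, and your cutoff argument goes through since the commutator terms are bounded by $C\,R^{-1}\Vert\nabla u;\mL^2(\Om^\kappa\cap\{R<|x|<2R\})\Vert^2\to 0$. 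With that substitution your proof is complete and coincides with the paper's.
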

\begin{proof}
The result is clear when $\lambda\in\bbC\backslash(0,+\infty)$. Let us apply the Rellich trick \cite{Rell43} to deal with the case $\lambda>0$. If $u$ solves (\ref{MainProblemD}), the function $\partial_{x_1}u$ satisfies
\begin{equation}\label{MainProblemDeriv}
\begin{array}{|rcll}
- \Delta_x(\partial_{x_1}u) &=& \lambda\,\partial_{x_1}u & \quad\mbox{in }\Omega^\kappa\\[10pt]
\partial_{x_1}u &=& 0 & \quad\mbox{on }\Sigma^\kappa.
\end{array}
\end{equation}
Note that since $\Omega^\kappa$ is convex, classical regularity results (see e.g., the second basic inequality in \cite[\S II.6]{Lad}) ensure that $u$ belongs to $\mH^2(\Om^\kappa)$ so that $\partial_{x_1}u$ falls in $\mH^1(\Om^\kappa)$. Multiplying (\ref{MainProblemD}) by $\partial_{x_1}u$, (\ref{MainProblemDeriv}) by $u$, integrating by parts and taking the difference, we obtain
\begin{equation}\label{identityIPP}
0=\int_{\Gamma_2^\kappa}\cfrac{\partial u}{\partial \nu}\,\cfrac{\partial u}{\partial x_1}\,ds
\end{equation}
(note that $\partial_{x_1}u=0$ on $\Gamma_1^\kappa$ because $u=0$ on $\Gamma_1^\kappa$). On $\Gamma_2^\kappa$, the fact that $u=0$ implies 
\[
-\kappa_1\cfrac{\partial u}{\partial x_1}=\cfrac{\partial u}{\partial x_3}\,.
\]
Therefore we get
\[
\cfrac{\partial u}{\partial \nu}=(1 + \kappa_1^2)^{-1/2}\,\,\Big(-\cfrac{\partial u}{\partial x_1}+\kappa_1\cfrac{\partial u}{\partial x_3}\,\Big)=-\sqrt{1+\kappa_1^2}\,\cfrac{\partial u}{\partial x_1}\,.
\]
Using this in (\ref{identityIPP}) gives $\partial u/\partial \nu=0$ on $\Gamma_2^\kappa$. Since we also have $u=0$ on $\Gamma_2^\kappa$ and $\Delta_xu+\lambda u=0$ in $\Om^\kappa$, the theorem of unique continuation (see e.g. \cite{Bers}, \cite[Thm.\,8.6]{CoKr13}) guarantees that $u\equiv0$ in $\Om^\kappa$.
\end{proof}

\section*{Acknowledgements}
The work of the second author was supported by the Russian Science Foundation, project 22-11-00046.

\bibliography{Bibli}
\bibliographystyle{plain}
\end{document}